\documentclass[10pt]{amsart}
\usepackage{amsfonts}
\allowdisplaybreaks[4]
\usepackage{epsfig}
\usepackage{color}
\newcommand{\abs}[1]{\left\lvert #1 \right\rvert}

\def\E#1{\mathbb{E}\left \{#1 \right\}}

\def\x{\vk{x}}
\def\njd{\{1 \ldot d\}}

\definecolor{c20}{rgb}{0.,0.7,0.}
\definecolor{c30}{rgb}{0.,0.,1.}
\definecolor{c40}{rgb}{1,0.1,0.7}
\definecolor{c50}{rgb}{1,0,0}
\definecolor{c60}{rgb}{1,0.9,0.1}
\definecolor{c70}{rgb}{0.50,1.00,0.00}

\def\EE#1{{\textcolor{c30}{#1}}}
\def\EE#1{#1}
\def\Eh#1{{\textcolor{c30}{#1}}}
\def\Eh#1{#1}

\def\tE#1{{\textcolor{c30}{#1}}}
\def\tE#1{#1}

\def\cw#1{{\textcolor{c40}{#1}}}
\def\cw#1{#1}

\def\zcw#1{{\textcolor{c40}{#1}}}
\def\zcw#1{#1}

\def\wzc#1{{\textcolor{c40}{#1}}}


\numberwithin{equation}{section}
\newtheorem{theo}{Theorem}[section]

\newtheorem{remark}[theo]{Remark}

\numberwithin{equation}{section}
\newtheorem{theorem}{Theorem}[section]

\newtheorem{lemma}{Lemma}[section]

\newcommand{\pk}[1]{\mathbb{P} \left( #1 \right) }
\newcommand{\EXP}[1]{\exp \left( #1 \right) }

\newcommand{\COM}[1]{}

\def\IF{\infty}

\newcommand{\R}{\mathbb{R}}
\newcommand{\inr}{\in \R}

\topmargin-2cm
\setlength{\oddsidemargin}{-0.5 cm}
\setlength{\evensidemargin}{-0.5 cm}
\setlength{\textwidth}{18 cm}
\setlength{\textheight}{24.5 cm}
\setlength{\parindent}{0.5cm}

\newcommand{\BQN}{\begin{eqnarray}}
\newcommand{\EQN}{\end{eqnarray}}
\newcommand{\BQNY}{\begin{eqnarray*}}
\newcommand{\EQNY}{\end{eqnarray*}}
\def\ldot{, \ldots,}

\def\polhk#1{\setbox0=\hbox{#1}{\ooalign{\hidewidth
    \lower1.5ex\hbox{`}\hidewidth\crcr\unhbox0}}}

\newcommand{\kb}[1]{\boldsymbol{#1}}
\newcommand{\vk}[1]{\kb{#1}}

\begin{document}

\title{Maxima of A Triangular Array of Multivariate Gaussian Sequence}

\author{Enkelejd  Hashorva}
\address{Enkelejd Hashorva, Department of Actuarial Science, 
University of Lausanne,\\
UNIL-Dorigny, 1015 Lausanne, Switzerland
}
\email{Enkelejd.Hashorva@unil.ch}

\author{Liang Peng}
\address{Liang Peng, School of Mathematics,
Georgia Institute of Technology, Atlanta, GA 30332-0160 \\
}
\email{Peng@math.gatech.edu}

\author{Zhichao Weng}
\address{Zhichao Weng, Department of Actuarial Science, 
University of Lausanne\\
UNIL-Dorigny, 1015 Lausanne, Switzerland
}
\email{zhichao.weng@unil.ch}

\bigskip

\date{\today}
 \maketitle

{\bf Abstract:}  It is known that the normalized maxima of a sequence of independent and identically distributed bivariate normal random vectors with correlation coefficient $\rho \in (-1,1)$ is asymptotically independent, which may seriously underestimate extreme probabilities in practice. By letting $\rho$ depend on the sample
size and go to one with certain rate, H\"usler and Reiss (1989) showed that the normalized maxima can become asymptotically dependent. In this paper, we extend such a study to a triangular array of multivariate Gaussian sequence, which further generalizes the results in Hsing, H\"usler and Reiss (1996) and Hashorva and Weng (2013).

{\bf Key Words:} Correlation coefficient; maxima; stationary Gaussian triangular array

{\bf AMS Classification:} Primary 60G15; secondary 60G70

\section{Introduction}

Let $(X_1^{(1)},X_1^{(2)}),\cdots,(X_n^{(1)},X_n^{(2)})$ be independent and identically distributed bivariate normal random vectors with zero means, unit variances and correlation coefficient $\rho\in [-1, 1]$. Put
\begin{equation}\label{un}
u_n(x)=x/a_n+b_n\quad\text{with} \quad a_n=\sqrt{2\ln n}\quad\text{and}\quad b_n=\sqrt{2\ln n}-\frac{\ln\ln n+\ln(4\pi)}{2\sqrt{2\ln n}}.
\end{equation}
When $|\rho|<1$, it is known that for any $x,y\inr$
\[
\Psi_{\rho}(u_n(x), u_n(y)):=\pk{\max_{1\le i\le n}X_i^{(1)}\le u_n(x), \max_{1\le i\le n}X_i^{(2)}\le u_n(y)}\to
e^{-e^{-x}-e^{-y}} \quad\text{as}\quad n\to \IF,
\]
where the limit becomes the joint distribution of two independent Gumbel random variables. In this case, $X_1^{(1)}$ and $X_1^{(2)}$ are called asymptotically independent.
Although normal distributions have many good properties and receive much attention in risk management (see McNeil, Frey and Embrechts (2005) for some overviews), this asymptotic independence property does seriously underestimate certain extreme probabilities in practice. To overcome this drawback,
H\"usler and Reiss (1989) proposed to let
$\rho=\rho(n)$ depend on the sample size $n$ such that
\begin{equation}\label{HR}
(1-\rho(n))\ln n\to\lambda\in [0, \infty] \quad\text{as}\quad n\to\infty,
\end{equation}
and then showed that
\begin{equation}\label{lim}\lim_{n\to\infty}\Psi_{\rho(n)}(u_n(x), u_n(y))
=e^{-\Phi(\sqrt{\lambda}+\frac{x-y}{2\sqrt{\lambda}})e^{-y}-\Phi(\sqrt{\lambda}+\frac{y-x}{2\sqrt\lambda})e^{-x}}=:H_\lambda(x,y)
\end{equation}
for $x,y\inr$, where $\Phi$ denotes the standard normal distribution function. It is easy to see that the limit distribution $H_\lambda$ (referred to as the H\"usler-Reiss distribution) is not a product distribution when $\lambda\in (0, \infty)$, i.e., $X_1^{(1)}$ and $X_1^{(2)}$ are asymptotically dependent in this case. Using (\ref{HR}), Frick and Reiss (2013) extended the above limit to the maxima of normal copulas. Some other extensions  of H\"usler and Reiss (1989) to more general triangular
arrays have been made in the literature too as reviewed below.

Consider a triangular array of normal random variables $X_{n,i}, i=1,2,\cdots, n=1,2,\cdots$ such that for each $n$, $\{X_{n,i}, i\ge 1\}$ is a stationary normal sequence with mean zero, variance one and covariance $\rho_{n,j}=\E{X_{n,1}X_{n,j+1}}$.
Motivated by condition (\ref{HR}), by assuming that
\begin{equation}
\label{con1}
(1-\rho_{n,j})\ln n\to\delta_j\in (0, \infty]\quad\text{for all}\quad j\ge 1
\end{equation}
as $n\to\infty$,
 and some other conditions on $\rho_{n,j}$, Hsing, H\"usler and Reiss (1996) showed that
\begin{equation}
\label{res1}
\lim_{n\to\infty}\pk{\max_{1\le j\le n}X_{n,j}\le u_n(x)}=e^{-\theta e^{-x}}
\end{equation}
holds for all $x\inr$, where
\[\theta=\pk{A/2+\sqrt{\delta_k}W_k\le\delta_k\quad\text{for all}\quad k\ge 1\quad\text{such that}\quad \delta_k<\infty}\Eh{,}
\]
with $A$ being a standard exponential random variable independent of $W_k$ and $\{W_k: \delta_k<\infty, k\ge 1\}$ being jointly normal with zero means and
\[ \E{W_iW_j}=\frac{\delta_i+\delta_j-\delta_{|i-j|}}{2\sqrt{\delta_i\delta_j}}.\]
Here $\theta$ is set to be $1$ if all $\delta_j$'s are infinite.
Recently French and Davis (2013) generalized this study to a Gaussian random field on a lattice.

Another extension of H\"usler and Reiss (1989) made by Hashorva and Weng (2013) is to study a triangular array of 2-dimensional stationary Gaussian sequence as follows.

Consider a triangular array of bivariate normal random vectors $X_{n,j}=(X_{n,j}^{(1)},X_{n,j}^{(2)}), j=1,2,\cdots,n=1,2,\cdots$ such that for each $n$, $\{X_{n,j}, j\ge 1\}$ is a Gaussian sequence with mean zero, variance one and covariance
\[\E{X_{n,k}^{(i)}X_{n,l}^{(j)}}=\rho_{ij}(|k-l|,n)\quad\text{for}\quad i,j=1,2.\]
By assuming that
\begin{equation}
\label{con2}
\lim_{n\to\infty}(1-\rho_{12}(0,n))\ln n=\lambda\in [0, \infty]
\end{equation}
and
\begin{equation}
\label{con3}
\sigma:=\max_{1\le k<n, 1\le i,j\le 2}|\rho_{ij}(k,n)|<1,\quad \lim_{n\to\infty}\max_{l_n\le k<n,1\le i,j\le 2}\rho_{ij}(k,n)\ln n=0,
\end{equation}
where $l_n=[n^{\alpha}]$ for some $\alpha\in (0, \frac{1-\sigma}{1+\sigma})$, Hashorva and Weng (2003) proved that
\begin{equation}
\label{res2}
\begin{array}{ll}
\lim_{n\to\infty}\pk{\max_{1\le k\le n}X_{n,k}^{(1)}\le u_n(x), \max_{1\le k\le n}X_{n,k}^{(2)}\le u_n(y)}=H_\lambda(x,y)
\end{array}
\end{equation}
for all $x,y\inr$. 
Taking $y=\infty$ in (\ref{res2}), we have
\[\lim_{n\to\infty}\pk{\max_{1\le k\le n}X_{n,k}^{(1)}\le u_n(x)}=e^{-e^{-x}}\quad\text{for}\quad x\in \R,\]
which may contradict (\ref{res1}).
Note that when (\ref{con1}) holds for $X_{n,k}^{(1)}$ or/and $X_{n,k}^{(2)}$, we have $\lim_{n\to\infty}\sigma=1$ and $S_{n1}$ does not converge to zero (see the bottom of page 323 in Hashorva and Weng (2013)). That is, convergence in (\ref{res2}) excludes the possibility that (\ref{con1}) holds for $X_{n,k}^{(1)}$ and $X_{n,k}^{(2)}$. This motivates us to investigate the limit of $\pk{\max_{1\le k\le n}X_{n,k}^{(1)}\le u_n(x), \max_{1\le k\le n}X_{n,k}^{(2)}\le u_n(y)}$ when (\ref{con2}) holds and (\ref{con1}) holds for both $X_{n,k}^{(1)}$ and $X_{n,k}^{(2)}$. Such a study will generalize the results in both Hsing, H\"usler and Reiss (1996) and Hashorva and Weng (2013).

Some other recent extensions of H\"usler and Reiss (1989) is to drop the Gaussian assumption. For example, Hashorva (2013) studied the maxima of some spherical processes; 
Hashova, Kabluchko and W\"ubker (2012) investigated the maxima of $\chi^2$-random vectors; Manjunath, Frick and Reiss (2012) discussed
 the maxima in the setup of extremal discriminant analysis; Engelke, Kabluchko and Schlather (2014) analyzed the maxima for some type of conditional Gaussian models.

We organize  this paper as follows. Section 2 derives the limit for the normalized componentwise maxima of a triangular array of $d$-dimensional normal random vectors when (\ref{con1}) holds for both marginals and dependence.  All proofs are put in Section 3.

 \section{Main results}
Throughout we consider a triangular array
 $\vk{X}_{n,k}=(X_{n,k}^{(1)},\cdots,X_{n,k}^{(d)}), k=1,2,\cdots,n=1,2,\cdots$ such that for each $n$, $\{\vk{X}_{n,k}, k\ge 1\}$ is a $d$-dimensional  stationary Gaussian sequence
 with mean zero, variance   one and correlations given by
$\E{X_{n,k}^{(i)}X_{n,l}^{(j)}}=\rho_{ij}(|k-l|,n)$
for $k,l=1,2,\cdots$ and $i,j=1,2,\cdots,d$.

Hereafter  $A$ stands for a unit exponential random variable being independent of all other random elements involved and $\vk{x}=(x_1 \ldot x_d)\inr^d$.

\begin{theorem}\label{ath1}
Let $\{\vk{X}_{n,k},k,n \ge1\}$ be a $d$-dimensional stationary Gaussian triangular array satisfying
\begin{equation}\label{eq1.2}
\left\{\begin{array}{ll}
&\lim_{n\to \IF}(1-\rho_{ij}(k,n))\ln n = \delta_{ij}(k)  \in (0,\IF]\quad\text{for}\quad i,j=1,\cdots,d; k=1,2,\cdots\\
&\lim_{n\to\infty}(1-\rho_{ij}(0,n))\ln n=\delta_{ij}(0)\in [0, \infty]\quad\text{for}\quad i, j=1,\cdots,d.
\end{array}\right.
\end{equation}
Suppose that there exist positive integers $l_n,r_n$ satisfying
\begin{equation}\label{eq2.1}
\lim_{n\to \IF}\frac{l_n}{r_n}=0, \quad \lim_{n\to \IF}\frac{r_n}{n}=0,
\end{equation}
\begin{equation}\label{eq2.2}
\lim_{n\to \IF}\frac{n^2}{r_n}\sum_{i,j=1}^d\sum_{s=l_n}^n |\rho_{ij}(s,n)|
\EXP{-\frac{2\ln n -\ln\ln n}{1+|\rho_{ij}(s,n)|}}=0
\end{equation}
and
\begin{equation}\label{eq2.3}
\lim_{m\to \IF}\limsup_{n\to \IF}\sum_{i,j=1}^d\sum_{s=m}^{{r_n}} n^{-\frac{1-\rho_{ij}(s,n)}{1+\rho_{ij}(s,n)}}
\frac{(\ln n)^{-\rho_{ij}(s,n)/(1+\rho_{ij}(s,n))}}{\sqrt{1-\rho_{ij}^2(s,n)}}=0.
\end{equation}
Then
\BQN\label{eq:Main}
\lim_{n\to \IF}\pk{\max_{1\le k\le n}X_{n,k}^{(1)}\le u_n(x_1),\cdots,\max_{1\le k\le n}X_{n,k}^{(d)}\le u_n(x_d)}=\EXP{-\sum_{i=1}^d\vartheta_i(\vk{x})\zcw{e^{-x_i}}}, \quad \Eh{\forall \x\inr^d},
\EQN
 where
   \begin{equation}
\label{aeq1}
\begin{array}{ll}
\vartheta_1(\vk{x})=&\mathbb{P}\Bigl (
 \frac{A}{2}+\sqrt{\delta_{t1}(k-1)}W_{k,1}^{(t)}\le \delta_{t1}(k-1)+\frac{x_t-x_1}{2},1\le t\le d,\\
&\quad\mbox{for all}
\quad  k\ge 2 \ \ \mbox{such that} \ \ \delta_{t1}(k-1)<\IF\Bigr)\\
\end{array}
\end{equation}
 and for $i=2,\cdots,d$
\begin{equation}
\label{aeq6}
\begin{array}{ll}
\vartheta_i(\vk{x})=&\mathbb{P}\Bigl (\frac{A}{2}+\sqrt{{\delta}_{si}(0)}W_{1,i}^{(s)}\le {\delta}_{si}(0)+\frac{x_s-x_i}{2}, 1\le s <i, {\delta}_{si}(0)<\infty,\\
&\quad \frac{A}{2}+\sqrt{\delta_{ti}(k-1)}W_{k,i}^{(t)}\le \delta_{ti}(k-1)+\frac{x_t-x_i}{2},1\le t\le d,\\
&\quad\mbox{for all}
\quad  k\ge 2 \ \ \mbox{such that} \ \ \delta_{ti}(k-1)<\IF\Bigr),\\
\end{array}
\end{equation}
where 
$\{W_{k,i}^{(t)}, 1\le t\le d, \delta_{ti}(k-1)<\IF, k \ge 1\}$ 
are jointly normal with zero means and for each $i=1,\cdots,d$
\BQN \label{eqCOV}
 Cov(W_{k,i}^{(j)}, W_{l,i}^{(t)})&=&\frac{\delta_{ji}(k-1)+\delta_{ti}(l-1)-\delta_{jt}(|k-l|)}{2\sqrt{\delta_{ji}(k-1)\delta_{ti}(l-1)}},
\quad j,t =1,\cdots,d, \quad k,l \ge 1.
 \EQN
\end{theorem}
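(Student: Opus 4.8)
The plan is to follow the blocking methodology of Hsing, H\"usler and Reiss (1996), adapted to the multivariate setting. Write $M_n^{(i)}=\max_{1\le k\le n}X_{n,k}^{(i)}$ and let $p_n(\vk{x})=\pk{M_n^{(i)}\le u_n(x_i),\ i=1,\dots,d}$ be the quantity in \eqref{eq:Main}. First I would partition $\{1,\dots,n\}$ into $\lfloor n/r_n\rfloor$ consecutive big blocks of length $r_n$, each followed by a separating gap of length $l_n$; by \eqref{eq2.1} the gaps have total length $o(n)$ and contribute negligibly to any exceedance probability, so they may be discarded. The role of \eqref{eq2.2} is to decouple the big blocks: applying a normal comparison (Berman/Slepian-type) inequality bounds the difference between $p_n(\vk{x})$ and the product of the per-block probabilities by a double sum of $|\rho_{ij}(s,n)|\EXP{-(2\ln n-\ln\ln n)/(1+|\rho_{ij}(s,n)|)}$ over indices separated by at least $l_n$, and \eqref{eq2.2} forces this error to vanish. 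Hence it suffices to show that a single big block contributes $1-\tfrac{r_n}{n}\sum_{i=1}^d\vartheta_i(\vk{x})e^{-x_i}(1+o(1))$, whence $p_n(\vk{x})\to\EXP{-\sum_{i=1}^d\vartheta_i(\vk{x})e^{-x_i}}$ upon taking the product of $\lfloor n/r_n\rfloor$ such factors.

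For a single big block $B$ of length $r_n$ I would write $\pk{\max_{k\in B}X_{n,k}^{(i)}\le u_n(x_i),\ \forall i}=1-\pk{E_B}$, where $E_B$ is the event that some component exceeds its threshold somewhere in $B$. Decomposing $E_B$ according to the lexicographically first exceeding pair (ordered by location $k$, then by component index), one gets the exact identity
\[
\pk{E_B}=\sum_{k\in B}\sum_{i=1}^d\pk{X_{n,k}^{(i)}>u_n(x_i),\ \text{no earlier pair exceeds}},
\]
where ``earlier'' means a pair at a location $<k$ (any component) or at location $k$ with component index $<i$. By stationarity each summand depends only on the position of $k$ within its cluster, and condition \eqref{eq2.3}, an anti-clustering control, guarantees that contributions of lags beyond a fixed $m$ are uniformly negligible; thus the cluster may be truncated to finitely many lags, boundary locations within $o(r_n)$ of the block ends contribute nothing in the limit, and the limit may be exchanged with the sum. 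This reduces matters to evaluating, for each component $i$, the limit of $n$ times the probability that a fixed pair is a leading exceedance.

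The core computation is the conditional Gaussian limit producing $\vartheta_i$. Conditioning on the exceedance $X_{n,k}^{(i)}>u_n(x_i)$, the scaled overshoot $u_n\bigl(X_{n,k}^{(i)}-u_n(x_i)\bigr)$ converges to a unit exponential $A$, while for a neighbour $X_{n,l}^{(t)}$ with correlation $\rho=\rho_{ti}(|k-l|,n)$ the standardized variable $\bigl(X_{n,l}^{(t)}-\rho X_{n,k}^{(i)}\bigr)/\sqrt{1-\rho^2}$ is standard normal. A direct expansion using \eqref{eq1.2}, namely $(1-\rho_{ti}(k-1,n))\ln n\to\delta_{ti}(k-1)$, shows that $\{X_{n,l}^{(t)}\le u_n(x_t)\}$ converges to $\bigl\{\tfrac{A}{2}+\sqrt{\delta_{ti}(k-1)}\,W_{k,i}^{(t)}\le\delta_{ti}(k-1)+\tfrac{x_t-x_i}{2}\bigr\}$, the limiting standardized neighbours $W_{k,i}^{(t)}$ being jointly Gaussian with the covariances obtained by the same expansion, which yields exactly \eqref{eqCOV}. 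Collecting over all earlier pairs gives \eqref{aeq1} and \eqref{aeq6} as the limiting conditional probability that a fixed pair is a leading exceedance; multiplying by the marginal intensity $n\,\pk{X_{n,1}^{(i)}>u_n(x_i)}\to e^{-x_i}$ produces the per-component contribution $\vartheta_i(\vk{x})e^{-x_i}$, and summing over $i$ and over the $r_n$ locations in $B$ gives the required single-block estimate.

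The hard part, I expect, will not be the block decoupling (routine once \eqref{eq2.2} is in force) nor the individual conditional expansion, but rather the clustering control: proving that \eqref{eq2.3} suffices to make the truncated-cluster approximation valid \emph{uniformly}, so that the single-block probability converges to $\tfrac{r_n}{n}\sum_i\vartheta_i(\vk{x})e^{-x_i}$ with no residual double counting and no boundary defect. This forces one to establish multi-point, multi-component joint Gaussian exceedance bounds and to verify that the infinite-lag limit defining $\vartheta_i$ is approached without interference from the (growing) number of lags in $B$. A secondary subtlety is the combinatorial bookkeeping of the lexicographic ordering, which is precisely what produces the asymmetric form of $\vartheta_i$ for $i\ge 2$: the lag-zero constraints in \eqref{aeq6} over components $s<i$ encode that component $i$ is first among simultaneous exceedances at the reference location, while the lag constraints $k\ge 2$ encode the earlier locations, exactly matching \eqref{aeq1} when $i=1$.
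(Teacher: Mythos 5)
Your proposal is correct and follows essentially the same route as the paper: blocking via Berman's inequality and \eqref{eq2.2} (Lemma \ref{ale1}), an exact exceedance-pair decomposition (Lemma \ref{le1}; the paper orders by \emph{last} location and smallest exceeding component, the mirror image of your lexicographic-first ordering, equivalent under stationarity), the conditional Gaussian limit producing the $\vartheta_i$ and the covariances \eqref{eqCOV} (Lemma \ref{le3.5}), and the anti-clustering control derived from \eqref{eq2.3} (Lemma \ref{le3.6}). The uniformity/boundary difficulty you single out as the hard part is indeed the crux, and the paper resolves it with O'Brien's two-scale device: the upper bound uses $r_n$-blocks directly, while the lower bound introduces an intermediate block length $s_n$ with $r_n=o(s_n)$, $s_n=o(n)$, together with \eqref{eq5} and an induction (their \eqref{eq6}) showing that in an $s_n$-block the exceedance locations whose forward window is shorter than $r_n$ cost only a multiplicative $1+o(1)$.
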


\begin{remark} i) The $\vartheta$'s above should be set to 1 if all $\delta$'s involved are equal to infinity.
Clearly, if only a finite number of $\delta$'s is not equal to infinity, then $\vartheta$'s are all positive and
thus the limit in \eqref{eq:Main} is a  max-stable distribution function.
As mentioned in Remark 2 of French and Davis (2013), for some tractable correlation functions it is possible to show
that $\vartheta$'s are positive.\\
ii) Note that $\vartheta_i(\x)$ does not depend on $x_i$ for each $i\le d$. In the particular case that $\vartheta_i(\x)$ is a non-degenerate distribution function, then clearly $G(\x)= e^{- \sum_{i=1}^d \vartheta_i(\x) e^{-x_i}}$ is a max-stable $d$-dimensional distribution function.\\ If condition (2.1) holds with $\delta_{ij}(k)=\IF$ for any  index $i,j\le d$ and $k\ge 1$, then clearly
$$ \vartheta_i(\x)= \vartheta_i(x_1 \ldot x_i), \quad \x\inr^d, i \le d.$$
Moreover for this case the limiting distribution $G(\x)= e^{- \sum_{i=1}^d \vartheta_i(x_1 \ldot x_i) e^{-x_i}}$
coincides with the $d$-dimensional max-stable H\"usler-Reiss distribution.
\\
iii)
As in Theorem 2.2 of Hsing, H\"usler and Reiss (1996), conditions (\ref{eq2.1}), (\ref{eq2.2}) and (\ref{eq2.3}) can be replaced by
\[\lim_{n\to\infty}\sum_{1\le i,j\le d}\max_{l_n\le k\le n}|\rho_{ij}(k,n)|\ln n=0 \quad\text{for some}\quad l_n=o(n)\]
and
\[\lim_{m\to \IF}\limsup_{n\to \IF}\sum_{i,j=1}^d\sum_{s=m}^{{l_n}} n^{-\frac{1-\rho_{ij}(s,n)}{1+\rho_{ij}(s,n)}}
\frac{(\ln n)^{-\rho_{ij}(s,n)/(1+\rho_{ij}(s,n))}}{\sqrt{1-\rho_{ij}^2(s,n)}}=0.
\] These last two conditions are easier to check than those in Theorem \ref{ath1}.\\

\end{remark}

\section{Proofs}

For notational simplicity we shall define
$$M_{k,l}^{(i)}=\max_{k<s\le l}X_{n,s}^{(i)}, \quad M_{l}^{(i)}=M_{0,l}^{(i)}=\max_{1\le s \le l}X_{n,l}^{(i)},\quad M_{l,l}^{(i)}=-\IF$$
for $i=1,2,\cdots,d$, $k=1,\cdots,l$ and $l=1,\cdots,n$.
Before proving the theorem, we need some lemmas.

\begin{lemma}\label{le1}
For any $n\times d$ random matrix $\{X_{n,k}^{(i)}, 1 \le k \le n, 1 \le i \le d\}$ and any vector of constants $(u^{(1)}, \cdots, u^{(d)})$ we have
\begin{eqnarray}\label{addeq6}
\pk{\bigcup_{i=1}^d \{M_{n}^{(i)}>u^{(i)}\}}
&=&\sum_{k=1}^n\pk{X_{n,k}^{(1)}>u^{(1)},
\bigcap_{t=1}^d\{M_{k,n}^{(t)}\le u^{(t)}\}}\\
&&+\sum_{i=2}^d\sum_{k=1}^n\pk{X_{n,k}^{(i)}>u^{(i)},
\bigcap_{s=1}^{i-1}\{M_{k-1,n}^{(s)}\le u^{(s)}\},\bigcap_{t=i}^d\{M_{k,n}^{(t)}\le u^{(t)}\}}.\nonumber
\end{eqnarray}
\end{lemma}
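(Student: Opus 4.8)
The plan is to realize the right-hand side as a disjoint decomposition of the event on the left according to the position of the \emph{last} exceedance. Writing the union event as
\[
\bigcup_{i=1}^d \{M_n^{(i)} > u^{(i)}\} = \Bigl\{\exists\, k\in\{1\ldot n\},\ i\in\{1\ldot d\}:\ X_{n,k}^{(i)}>u^{(i)}\Bigr\},
\]
I observe that on this event there is a well-defined largest time index $k^\ast$ at which some coordinate exceeds its threshold, and, at that time, a well-defined smallest coordinate $i^\ast$ with $X_{n,k^\ast}^{(i^\ast)}>u^{(i^\ast)}$. First I would partition the union according to the value of the pair $(k^\ast,i^\ast)$. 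Since $(k^\ast,i^\ast)$ is uniquely determined on every sample path lying in the union, the resulting cells are pairwise disjoint and their union is exactly $\bigcup_{i=1}^d \{M_n^{(i)}>u^{(i)}\}$; summing probabilities then yields the identity, provided each cell $\{k^\ast=k,\,i^\ast=i\}$ coincides with the matching summand in \eqref{addeq6}.

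The core step is therefore to rewrite the cell $\{k^\ast=k,\,i^\ast=i\}$ in terms of the running maxima $M_{k,n}^{(t)}$ and $M_{k-1,n}^{(t)}$. By definition $\{k^\ast=k\}$ says that no coordinate exceeds strictly after time $k$, i.e.\ $M_{k,n}^{(t)}\le u^{(t)}$ for all $t=1\ldot d$, while some coordinate does exceed at time $k$, and $\{i^\ast=i\}$ adds that $X_{n,k}^{(i)}>u^{(i)}$ together with the requirement that none of the coordinates $1\ldot i-1$ exceeds at time $k$. For $i=1$ there is no smaller coordinate, so the cell reduces to $\{X_{n,k}^{(1)}>u^{(1)}\}\cap\bigcap_{t=1}^d\{M_{k,n}^{(t)}\le u^{(t)}\}$, which is precisely the first sum. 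For $i\ge 2$ I would merge the two demands on each coordinate $s<i$ --- no exceedance at time $k$ and no exceedance after time $k$ --- into the single event $M_{k-1,n}^{(s)}\le u^{(s)}$, using that $M_{k-1,n}^{(s)}=\max_{k\le s'\le n}X_{n,s'}^{(s)}$ already includes time $k$, while keeping $M_{k,n}^{(t)}\le u^{(t)}$ for $t\ge i$ and leaving the coordinates $t>i$ at time $k$ unconstrained, exactly as in \eqref{addeq6}.

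The verification hinges on the boundary bookkeeping, which is where I expect the only genuine care is needed: confirming that the index shift $M_{k-1,n}$ versus $M_{k,n}$ correctly encodes the statement ``$i$ is the smallest coordinate exceeding at the last exceedance time,'' and checking the degenerate cases through the conventions $M_{l,l}^{(i)}=-\IF$ (so that $k=n$ imposes no constraint beyond time $n$) and $M_{0,n}^{(s)}=M_n^{(s)}$ (so that $k=1$ forces coordinates $s<i$ never to exceed). Once the cell identities are established, disjointness and exhaustiveness are immediate and the lemma follows by additivity of $\mathbb{P}$; note that no independence, stationarity, or Gaussianity is invoked, consistent with the lemma being stated for an arbitrary $n\times d$ random matrix.
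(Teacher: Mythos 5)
Your proof is correct, and it takes a genuinely different route from the paper's. You realize the right-hand side of \eqref{addeq6} directly as the probabilities of the cells of a partition of the union event, indexed by the pair consisting of the last exceedance time $k^\ast$ and the smallest coordinate $i^\ast$ exceeding at that time: for $s<i$ the two requirements ``no exceedance at time $k$'' and ``no exceedance after time $k$'' merge into $M_{k-1,n}^{(s)}\le u^{(s)}$ (since $M_{k-1,n}^{(s)}=\max\bigl(X_{n,k}^{(s)},M_{k,n}^{(s)}\bigr)$), while for $t\ge i$ only $M_{k,n}^{(t)}\le u^{(t)}$ is imposed, which reproduces each summand exactly; disjointness and exhaustiveness are automatic because $(k^\ast,i^\ast)$ is a well-defined function on the union event, and the conventions $M_{l,l}^{(i)}=-\IF$ and $M_{0,n}^{(s)}=M_{n}^{(s)}$ dispose of the boundary cases $k=n$ and $k=1$ as you indicate. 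The paper instead argues by induction on $d$: it quotes O'Brien (1987) for $d=1$ (the telescoping identity \eqref{addeq7}), verifies $d=2$ by a direct computation (\eqref{aeq2}), and passes from $d=m-1$ to $d=m$ through the lengthy manipulation \eqref{eq2}. Your argument is shorter, requires no induction, and makes the probabilistic meaning of each summand transparent. What the paper's longer route buys is a collection of intermediate identities --- \eqref{addeq7}, \eqref{aeq2} and \eqref{eq2} --- which are reused later in the proof of Theorem \ref{ath1} when deriving the truncated decomposition \eqref{eq6} (there the sums run only up to $s_n-r_n$, so the full lemma cannot simply be cited); with your approach those auxiliary identities would still need to be established separately.
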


\begin{proof}
The case of $d=1$ directly follows from O'Brien (1987). We shall prove the case of $d=2$ and then use the induction method to conclude that the lemma holds for any $d\ge 2$.

It is straightforward to check that for any $s\ge 0$ and $i=1,\cdots,d,$
\BQNY
\pk{M_{s,n}^{(i)}>u^{(i)}}&=&\pk{X_{n,n}^{(i)}>u^{(i)}}+\pk{M_{s,n-1}^{(i)}>u^{(i)}, X_{n,n}^{(i)}\le u^{(i)}}\\
&=&\pk{X_{n,n}^{(i)}>u^{(i)}, M_{n,n}^{(i)}\le u^{(i)}}+\pk{X_{n,n-1}^{(i)}>u^{(i)}, X_{n,n}^{(i)}\le u^{(i)}}\\
&&+\pk{M_{s,n-2}^{(i)}>u^{(i)},X_{n,n-1}^{(i)}\le u^{(i)}, X_{n,n}^{(i)}\le u^{(i)}}\\
&=&\pk{X_{n,n}^{(i)}>u^{(i)}, M_{n,n}^{(i)}\le u^{(i)}}+\pk{X_{n,n-1}^{(i)}>u^{(i)}, M_{n-1,n}^{(i)}\le u^{(i)}}\\
&&+\pk{M_{s,n-2}^{(i)}>u^{(i)},X_{n,n-1}^{(i)}\le u^{(i)}, X_{n,n}^{(i)}\le u^{(i)}}.
\EQNY
Continuing  the above decomposition, we have
\begin{equation}\label{addeq7}
\pk{M_{s,n}^{(i)}>u^{(i)}}=\sum_{k=s+1}^n\pk{X_{n,k}^{(i)}>u^{(i)},
M_{k,n}^{(i)}\le u^{(i)}}
\end{equation}
for any $s\ge 0$. For proving that (\ref{addeq6}) holds for the case of $d=2$, we first note that
\BQN\label{aeq2}
\lefteqn{\pk{M_{n}^{(1)}\le u^{(1)},M_{n}^{(2)}> u^{(2)}}}\notag\\
&=&\sum_{k=1}^{n}\pk{X_{n,k}^{(2)}>u^{(2)},M_{k,n}^{(2)}\le u^{(2)},M_{n}^{(1)}\le u^{(1)}}\notag\\
&=&\sum_{k=1}^{n}\pk{X_{n,k}^{(2)}>u^{(2)},M_{k,n}^{(2)}\le u^{(2)},M_{k-1,n}^{(1)}\le u^{(1)}}\notag\\
&&-\sum_{k=1}^{n}\pk{X_{n,k}^{(2)}>u^{(2)},M_{k,n}^{(2)}\le u^{(2)},M_{k-1,n}^{(1)}\le u^{(1)}, M_{k-1}^{(1)}>u^{(1)}}\notag\\
&=&\sum_{k=1}^{n}\pk{X_{n,k}^{(2)}>u^{(2)},M_{k,n}^{(2)}\le u^{(2)},M_{k-1,n}^{(1)}\le u^{(1)}}\notag\\
&&-\sum_{k=1}^{n}\sum_{l=1}^{k-1}\pk{X_{n,k}^{(2)}>u^{(2)},M_{k,n}^{(2)}\le u^{(2)},X_{n,l}^{(1)}>u^{(1)},M_{l,n}^{(1)}\le u^{(1)}}\notag\\
&=&\sum_{k=1}^{n}\pk{X_{n,k}^{(2)}>u^{(2)},M_{k,n}^{(2)}\le u^{(2)},M_{k-1,n}^{(1)}\le u^{(1)}}\notag\\
&&-\sum_{l=1}^{n-1}\sum_{k=l+1}^{n}\pk{X_{n,k}^{(2)}>u^{(2)},M_{k,n}^{(2)}\le u^{(2)},X_{n,l}^{(1)}>u^{(1)},M_{l,n}^{(1)}\le u^{(1)}}\notag\\
&=&\sum_{k=1}^{n}\pk{X_{n,k}^{(2)}>u^{(2)},M_{k,n}^{(2)}\le u^{(2)},M_{k-1,n}^{(1)}\le u^{(1)}}
-\sum_{l=1}^{n-1}\pk{M_{l,n}^{(2)}> u^{(2)},X_{n,l}^{(1)}>u^{(1)},M_{l,n}^{(1)}\le u^{(1)}},
\EQN
which can be used to show that
\BQNY
\lefteqn{\pk{\{M_{n}^{(1)}>u^{(1)}\}\cup \{M_{{n}}^{(2)}> u^{(2)}\}}}\\
&=&\pk{M_{n}^{(1)}> u^{(1)}}+\pk{M_{n}^{(1)}\le u^{(1)},M_{{n}}^{(2)}> u^{(2)}}\\
&=&\sum_{l=1}^{n}\pk{X_{n,l}^{(1)}>u^{(1)},M_{l,n}^{(1)}\le u^{(1)}}+\sum_{k=1}^{n}\pk{X_{n,k}^{(2)}>u^{(2)},M_{k,n}^{(2)}\le u^{(2)},M_{k-1,n}^{(1)}\le u^{(1)}}\\
&&-\sum_{l=1}^{n-1}\pk{M_{l,n}^{(2)}> u^{(2)},X_{n,l}^{(1)}>u^{(1)},M_{l,n}^{(1)}\le u^{(1)}}\\
&=&\pk{X_{n,n}^{(1)}>u^{(1)}}+\left(\sum_{l=1}^{n-1}\pk{X_{n,l}^{(1)}>u^{(1)},M_{l,n}^{(1)}\le u^{(1)}}-\sum_{l=1}^{n-1}\pk{M_{l,n}^{(2)}> u^{(2)},X_{n,l}^{(1)}>u^{(1)},M_{l,n}^{(1)}\le u^{(1)}}\right)\\
&&+\sum_{k=1}^{n}\pk{X_{n,k}^{(2)}>u^{(2)},M_{k,n}^{(2)}\le u^{(2)},M_{k-1,n}^{(1)}\le u^{(1)}}\\
&=&\pk{X_{n,n}^{(1)}>u^{(1)}}
+\sum_{l=1}^{n-1}\pk{X_{n,l}^{(1)}>u^{(1)},M_{l,n}^{(1)}\le u^{(1)},M_{l,n}^{(2)}\le u^{(2)}}\\
&&+\sum_{k=1}^{n}\pk{X_{n,k}^{(2)}>u^{(2)},M_{k,n}^{(2)}\le u^{(2)},M_{k-1,n}^{(1)}\le u^{(1)}}\\
&=&\sum_{l=1}^{n}\pk{X_{n,l}^{(1)}>u^{(1)},M_{l,n}^{(1)}\le u^{(1)},M_{l,n}^{(2)}\le u^{(2)}}
+\sum_{k=1}^{n}\pk{X_{n,k}^{(2)}>u^{(2)},M_{k,n}^{(2)}\le u^{(2)},M_{k-1,n}^{(1)}\le u^{(1)}},
\EQNY 
i.e., \eqref{addeq6} holds for $d=2$.

Next, suppose that (\ref{addeq6}) holds for $d=m-1>2$, i.e.,
\BQN\label{eq1}
\pk{\bigcup_{i=1}^{m-1} \{M_{n}^{(i)}>u^{(i)}\}}
&=&\sum_{k=1}^n\pk{X_{n,k}^{(1)}>u^{(1)},\bigcap_{t=1}^{m-1}\{M_{k,n}^{(t)}\le u^{(t)}\}}\notag \\
&&+\sum_{i=2}^{m-1}\sum_{k=1}^n\pk{X_{n,k}^{(i)}>u^{(i)},\bigcap_{s=1}^{i-1}\{M_{k-1,n}^{(s)}\le u^{(s)}\},\bigcap_{t=i}^{m-1}\{M_{k,n}^{(t)}\le u^{(t)}\}}.
\EQN
In view of \eqref{addeq7} and \eqref{eq1}, we have
\BQN\label{eq2}
\lefteqn{\pk{\bigcap_{i=1}^{m-1} \{M_n^{(i)}\le u^{(i)}\}, M_n^{(m)}>u^{(m)}}}\notag\\
&=&\sum_{k=1}^n\pk{X_{n,k}^{(m)}>u^{(m)},M_{k,n}^{(m)}\le u^{(m)},\bigcap_{i=1}^{m-1} \{M_n^{(i)}\le u^{(i)}\}}\notag\\
&=&\sum_{k=1}^n\pk{X_{n,k}^{(m)}>u^{(m)},M_{k,n}^{(m)}\le u^{(m)},\bigcap_{i=1}^{m-1} \{M_{k-1,n}^{(i)}\le u^{(i)}\}}\notag\\
&&-\sum_{k=1}^n\pk{X_{n,k}^{(m)}>u^{(m)},M_{k,n}^{(m)}\le u^{(m)},\bigcap_{i=1}^{m-1} \{M_{k-1,n}^{(i)}\le u^{(i)}\},\bigcup_{j=1}^{m-1} \{M_{k-1}^{(j)}>u^{(j)}\}}\notag\\
&=&\sum_{k=1}^n\pk{X_{n,k}^{(m)}>u^{(m)},M_{k,n}^{(m)}\le u^{(m)},\bigcap_{i=1}^{m-1} \{M_{k-1,n}^{(i)}\le u^{(i)}\}}\notag\\
&&-\sum_{k=1}^n\sum_{l=1}^{k-1}\pk{X_{n,k}^{(m)}>u^{(m)},M_{k,n}^{(m)}\le u^{(m)},\bigcap_{i=1}^{m-1} \{M_{k-1,n}^{(i)}\le u^{(i)}\},X_{n,l}^{(1)}>u^{(1)},\bigcap_{t=1}^{m-1} \{M_{l,k-1}^{(t)}\le u^{(t)}\}}\notag\\
&&-\sum_{k=1}^n\sum_{j=2}^{m-1}\sum_{l=1}^{k-1}\mathbb{P}\left(X_{n,k}^{(m)}>u^{(m)},M_{k,n}^{(m)}\le u^{(m)},
\bigcap_{i=1}^{m-1} \{M_{k-1,n}^{(i)}\le u^{(i)}\},\right. \notag\\
&&\qquad \left. X_{n,l}^{(j)}>u^{(j)},\bigcap_{s=1}^{j-1}\{M_{l-1,k-1}^{(s)}\le u^{(s)}\},\bigcap_{t=j}^{m-1}\{M_{l,k-1}^{(t)}\le u^{(t)}\}\right)\notag\\
&=&\sum_{k=1}^n\pk{X_{n,k}^{(m)}>u^{(m)},M_{k,n}^{(m)}\le u^{(m)},\bigcap_{i=1}^{m-1} \{M_{k-1,n}^{(i)}\le u^{(i)}\}}\notag\\
&&-\sum_{l=1}^{n-1}\sum_{k=l+1}^{n}\pk{X_{n,k}^{(m)}>u^{(m)},M_{k,n}^{(m)}\le u^{(m)},\bigcap_{i=1}^{m-1} \{M_{l,n}^{(i)}\le u^{(i)}\},X_{n,l}^{(1)}>u^{(1)}}\notag\\
&&-\sum_{j=2}^{m-1}\sum_{l=1}^{n-1}\sum_{k=l+1}^{n}
\pk{X_{n,k}^{(m)}>u^{(m)},M_{k,n}^{(m)}\le u^{(m)},
X_{n,l}^{(j)}>u^{(j)},\bigcap_{s=1}^{j-1}\{M_{l-1,n}^{(s)}\le u^{(s)}\},\bigcap_{t=j}^{m-1}\{M_{l,n}^{(t)}\le u^{(t)}\}}\notag\\
&=&\sum_{k=1}^n\pk{X_{n,k}^{(m)}>u^{(m)},M_{k,n}^{(m)}\le u^{(m)},\bigcap_{i=1}^{m-1} \{M_{k-1,n}^{(i)}\le u^{(i)}\}}\notag\\
&&-\sum_{l=1}^{n-1}\pk{M_{l,n}^{(m)}> u^{(m)},\bigcap_{i=1}^{m-1} \{M_{l,n}^{(i)}\le u^{(i)}\},X_{n,l}^{(1)}>u^{(1)}}\notag\\
&&-\sum_{j=2}^{m-1}\sum_{l=1}^{n-1}\pk{
X_{n,l}^{(j)}>u^{(j)},\bigcap_{s=1}^{j-1}\{M_{l-1,n}^{(s)}\le u^{(s)}\},\bigcap_{t=j}^{m-1}\{M_{l,n}^{(t)}\le u^{(t)}\}, M_{l,n}^{(m)}>u^{(m)}}.
\EQN
It follows from \eqref{eq1} and (\ref{eq2}) that
\begin{eqnarray*}
&&\pk{\bigcup_{i=1}^{m} \{M_{n}^{(i)}>u^{(i)}\}}\\
&=&\pk{\bigcup_{i=1}^{m-1} \{M_{n}^{(i)}>u^{(i)}\}}+\pk{\bigcap_{i=1}^{m-1} \{M_n^{(i)}\le u^{(i)}\}, M_n^{(m)}>u^{(m)}}\\
&=&\sum_{k=1}^n\pk{X_{n,k}^{(1)}>u^{(1)},\bigcap_{t=1}^{m-1}\{M_{k,n}^{(t)}\le u^{(t)}\}}\\
&&+\sum_{i=2}^{m-1}\sum_{k=1}^n\pk{X_{n,k}^{(i)}>u^{(i)},\bigcap_{s=1}^{i-1}\{M_{k-1,n}^{(s)}\le u^{(s)}\},\bigcap_{t=i}^{m-1}\{M_{k,n}^{(t)}\le u^{(t)}\}}\\
&&+\sum_{k=1}^n\pk{X_{n,k}^{(m)}>u^{(m)},M_{k,n}^{(m)}\le u^{(m)},\bigcap_{i=1}^{m-1} \{M_{k-1,n}^{(i)}\le u^{(i)}\}}\\
&&-\sum_{l=1}^{n-1}\pk{M_{l,n}^{(m)}> u^{(m)},\bigcap_{i=1}^{m-1} \{M_{l,n}^{(i)}\le u^{(i)}\},X_{n,l}^{(1)}>u^{(1)}}\\
&&-\sum_{j=2}^{m-1}\sum_{l=1}^{n-1}\pk{
X_{n,l}^{(j)}>u^{(j)},\bigcap_{s=1}^{j-1}\{M_{l-1,n}^{(s)}\le u^{(s)}\},\bigcap_{t=j}^{m-1}\{M_{l,n}^{(t)}\le u^{(t)}\}, M_{l,n}^{(m)}>u^{(m)}}\\
&=&\left(\sum_{k=1}^n\pk{X_{n,k}^{(1)}>u^{(1)},\bigcap_{t=1}^{m-1}\{M_{k,n}^{(t)}\le u^{(t)}\}}
-\sum_{l=1}^{n-1}\pk{M_{l,n}^{(m)}> u^{(m)},\bigcap_{i=1}^{m-1} \{M_{l,n}^{(i)}\le u^{(i)}\},X_{n,l}^{(1)}>u^{(1)}}\right)\\
&&+\left(\sum_{i=2}^{m-1}\sum_{k=1}^n\pk{X_{n,k}^{(i)}>u^{(i)},\bigcap_{s=1}^{i-1}\{M_{k-1,n}^{(s)}\le u^{(s)}\},\bigcap_{t=i}^{m-1}\{M_{k,n}^{(t)}\le u^{(t)}\}}\right.\\
&&\left.-\sum_{j=2}^{m-1}\sum_{l=1}^{n-1}\pk{
X_{n,l}^{(j)}>u^{(j)},\bigcap_{s=1}^{j-1}\{M_{l-1,n}^{(s)}\le u^{(s)}\},\bigcap_{t=j}^{m-1}\{M_{l,n}^{(t)}\le u^{(t)}\}, M_{l,n}^{(m)}>u^{(m)}}\right)\\
&&+\sum_{k=1}^n\pk{X_{n,k}^{(m)}>u^{(m)},M_{k,n}^{(m)}\le u^{(m)},\bigcap_{i=1}^{m-1} \{M_{k-1,n}^{(i)}\le u^{(i)}\}}\\
&=&\left(\pk{X_{n,n}^{(1)}>u^{(1)}}
+\sum_{l=1}^{n-1}\pk{X_{n,l}^{(1)}>u^{(1)},\bigcap_{i=1}^{m-1} \{M_{l,n}^{(i)}\le u^{(i)}\},M_{l,n}^{(m)}\le u^{(m)}}\right)\\
&&+\left(\sum_{i=2}^{m-1}\pk{X_{n,n}^{(i)}>u^{(i)},\bigcap_{s=1}^{i-1}\{X_{n,n}^{(s)}\le u^{(s)}\}}\right.\\
&&\left.+\sum_{j=2}^{m-1}\sum_{l=1}^{n-1}\pk{
X_{n,l}^{(j)}>u^{(j)},\bigcap_{s=1}^{j-1}\{M_{l-1,n}^{(s)}\le u^{(s)}\},\bigcap_{t=j}^{m-1}\{M_{l,n}^{(t)}\le u^{(t)}\}, M_{l,n}^{(m)}\le u^{(m)}}\right)\\
&&+\sum_{k=1}^n\pk{X_{n,k}^{(m)}>u^{(m)},M_{k,n}^{(m)}\le u^{(m)},\bigcap_{i=1}^{m-1} \{M_{k-1,n}^{(i)}\le u^{(i)}\}}\\
&=&\sum_{l=1}^{n}\pk{X_{n,l}^{(1)}>u^{(1)},\bigcap_{i=1}^{m} \{M_{l,n}^{(i)}\le u^{(i)}\}}+\sum_{j=2}^{m-1}\sum_{l=1}^{n}\pk{
X_{n,l}^{(j)}>u^{(j)},\bigcap_{s=1}^{j-1}\{M_{l-1,n}^{(s)}\le u^{(s)}\},\bigcap_{t=j}^{m}\{M_{l,n}^{(t)}\le u^{(t)}\}}\\
&&+\sum_{k=1}^n\pk{X_{n,k}^{(m)}>u^{(m)},M_{k,n}^{(m)}\le u^{(m)},\bigcap_{i=1}^{m-1} \{M_{k-1,n}^{(i)}\le u^{(i)}\}}\\
&=&\sum_{l=1}^{n}\pk{X_{n,l}^{(1)}>u^{(1)},\bigcap_{i=1}^{m} \{M_{l,n}^{(i)}\le u^{(i)}\}}\\
&&+\sum_{j=2}^{m}\sum_{l=1}^{n}\pk{
X_{n,l}^{(j)}>u^{(j)},\bigcap_{s=1}^{j-1}\{M_{l-1,n}^{(s)}\le u^{(s)}\},\bigcap_{t=j}^{m}\{M_{l,n}^{(t)}\le u^{(t)}\}},
\end{eqnarray*}
i.e., (\ref{addeq6}) holds for $d=m$. Hence the lemma follows from the induction method. \end{proof}

\begin{lemma}\label{ale1}
Let $\{\vk{X}_{n,k},k,n\ge 1\}$ be a $d$-dimensional stationary Gaussian triangular array.
If there exist positive integers $l_n$ and $r_n$ such that \eqref{eq2.1} and \eqref{eq2.2} hold, then we have for any $x_i\inr, i\le d$
\begin{equation}\label{eq3}
\lim_{n\to \IF}\left(\pk{\bigcap_{i=1}^{d} \{M_{n}^{(i)}\le u_n(x_i)\}}-\left(\pk{\bigcap_{i=1}^{d} \{M_{r_n}^{(i)}\le u_n(x_i)\}}\right)^{q_n}\right)=0,
\end{equation}
where $q_n=[n/r_n]$.
\end{lemma}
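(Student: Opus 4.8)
\emph{Strategy.} The plan is a large-block--small-gap argument: partition the first $q_nr_n$ coordinates into $q_n=[n/r_n]$ consecutive blocks of length $r_n$, separate them by gaps of length $l_n$, decouple the blocks by the normal comparison (Berman) inequality, and identify the $q_n$ resulting factors by stationarity. Write $u^{(i)}=u_n(x_i)$ and recall the tail asymptotics $n\,\pk{X_{n,1}^{(i)}>u^{(i)}}\to e^{-x_i}$, a consequence of $u_n(x)^2=2\ln n-\ln\ln n-\ln(4\pi)+2x+o(1)$; in particular $\pk{X_{n,1}^{(i)}>u^{(i)}}=O(1/n)$ uniformly over the fixed $x_1,\dots,x_d$. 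Put $p_n=r_n-l_n$, and for $1\le j\le q_n$ let the $j$-th active block be $((j-1)r_n,(j-1)r_n+p_n]$, discarding the trailing $l_n$ coordinates of $((j-1)r_n,jr_n]$.

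\emph{Two elementary reductions.} Since $n-q_nr_n<r_n$, subadditivity and the tail bound give
\[
0\le\pk{\bigcap_{i=1}^d\{M_{q_nr_n}^{(i)}\le u^{(i)}\}}-\pk{\bigcap_{i=1}^d\{M_n^{(i)}\le u^{(i)}\}}\le\sum_{i=1}^d r_n\,\pk{X_{n,1}^{(i)}>u^{(i)}}=O(r_n/n)\to0
\]
by \eqref{eq2.1}. Replacing each full block by its active part costs at most $\sum_i q_nl_n\,\pk{X_{n,1}^{(i)}>u^{(i)}}=O(q_nl_n/n)=O(l_n/r_n)\to0$, again by \eqref{eq2.1}. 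The purpose of the gaps is that any two coordinates in distinct active blocks are separated by at least $l_n$, so only the covariances $\rho_{ij}(s,n)$ with $s\ge l_n$ appear below.

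\emph{Decoupling (the crux).} Apply the normal comparison inequality to the $nd$ standardised variables $\{X_{n,s}^{(i)}\}$, comparing the true correlation matrix $\Sigma^1$ with the matrix $\Sigma^0$ obtained by zeroing every entry linking two distinct active blocks while keeping all within-block entries (so $\Sigma^0$ is block-diagonal). Each convex combination $(1-h)\Sigma^0+h\Sigma^1$ has unit diagonal and is positive semidefinite (the blocks of $\Sigma^0$ are principal submatrices of $\Sigma^1$), so the inequality applies and bounds the error of replacing $\pk{\bigcap_j\bigcap_i\{\cdot\le u^{(i)}\}}$ by $\prod_{j=1}^{q_n}\pk{\bigcap_i\{\cdot\le u^{(i)}\}}$ by
\[
C\sum_{i,j=1}^d\ \sum_{\substack{s,t\ \text{in distinct}\\ \text{active blocks}}}\frac{|\rho_{ij}(|s-t|,n)|}{\sqrt{1-\rho_{ij}^2(|s-t|,n)}}\,\EXP{-\frac{(u^{(i)})^2+(u^{(j)})^2}{2(1+|\rho_{ij}(|s-t|,n)|)}}.
\]
The expansion of $u_n^2$ gives $\EXP{-\frac{(u^{(i)})^2+(u^{(j)})^2}{2(1+|\rho|)}}\le C'\,\EXP{-\frac{2\ln n-\ln\ln n}{1+|\rho|}}$ with $C'=C'(\x)$, and the number of coordinate pairs at a fixed lag is at most $2q_nr_n\le 2n$; hence the displayed sum is at most $C''\,n\sum_{i,j}\sum_{s=l_n}^n|\rho_{ij}(s,n)|\EXP{-\frac{2\ln n-\ln\ln n}{1+|\rho_{ij}(s,n)|}}$, which, since $r_n\le n$ forces $n\le n^2/r_n$, is dominated by the expression in \eqref{eq2.2} and tends to $0$. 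Under $\Sigma^0$ the blocks are independent, so by stationarity the product equals $\big(\pk{\bigcap_i\{M_{p_n}^{(i)}\le u^{(i)}\}}\big)^{q_n}$; re-inserting the $l_n$ gap coordinates moves the base by $O(l_n/n)$ and, via $|a^{q_n}-b^{q_n}|\le q_n|a-b|$ for $a,b\in[0,1]$, moves the power by $O(q_nl_n/n)=O(l_n/r_n)\to0$. Collecting the four estimates yields \eqref{eq3}.

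\emph{Main obstacle.} The delicate point is the prefactor $(1-\rho_{ij}^2(s,n))^{-1/2}$ absorbed into $C$: for lags $s\approx l_n$ one could fear $|\rho_{ij}(s,n)|\to1$, in which case the prefactor grows like $\sqrt{\ln n}$ while $\EXP{-(2\ln n-\ln\ln n)/(1+|\rho|)}$ is only of order $n^{-1}\sqrt{\ln n}$. However, \eqref{eq2.2} itself excludes this: a single lag $s\ge l_n$ with $|\rho_{ij}(s,n)|\to1$ would contribute a summand of order $n^{-1}\sqrt{\ln n}$, whose product with $n^2/r_n\ge n$ could not vanish; thus \eqref{eq2.2} forces $\sup_n\max_{s\ge l_n,\,i,j}|\rho_{ij}(s,n)|<1$, so the prefactor is bounded. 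Verifying this uniform separation, together with the correct set-up of the comparison inequality for the vector-valued array, is where the real care lies; the remaining ingredients are routine given \eqref{eq2.1}.
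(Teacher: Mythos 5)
Your overall architecture --- long blocks separated by gaps of length $l_n$, removal of the remainder and of the gaps at cost $O(r_n/n)+O(l_n/r_n)$, decoupling of the blocks by a normal comparison inequality, identification of the $q_n$ factors by stationarity --- is the same as the paper's, and all of those peripheral estimates are correct. The gap sits exactly at the point you call the main obstacle, and your resolution of it is wrong. You use the classical comparison inequality, whose bound carries the prefactor $(1-\rho_{ij}^2(s,n))^{-1/2}$, and you absorb this prefactor into a constant by asserting that \eqref{eq2.2} forces $\sup_n\max_{s\ge l_n,\,i,j}|\rho_{ij}(s,n)|<1$. That implication is false: your reasoning treats the summand attached to a lag with $|\rho_{ij}(s,n)|=1-\epsilon_n\to 1$ as being of order $n^{-1}\sqrt{\ln n}$, whereas it is of order $n^{-1}\sqrt{\ln n}\,\EXP{-(1+o(1))\,\epsilon_n \ln n/2}$ (when, say, $\epsilon_n=O(\ln\ln n/\ln n)$), and the extra exponential factor can beat the multiplier $n^2/r_n$. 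Concretely, take $d=1$, $s_0=\lceil n/2\rceil+1$, $\epsilon_n=\frac{3\ln\ln n}{2\ln n}$, and $X_{n,k}=\sqrt{1-\epsilon_n}\,Z_{k\bmod s_0}+\sqrt{\epsilon_n}\,W_k$ with all $Z_j,W_k$ i.i.d.\ standard normal; this is a stationary Gaussian array whose correlations satisfy $\rho(s,n)=1-\epsilon_n$ for $s=s_0$ and $\rho(s,n)=0$ for every other $1\le s\le n$. With $r_n=\lceil n(\ln n)^{-1/8}\rceil$ and $l_n=\lceil\sqrt{r_n}\,\rceil$, conditions \eqref{eq2.1} and \eqref{eq2.2} hold, since the only nonzero term in \eqref{eq2.2} is of order $n(\ln n)^{1/8}\cdot n^{-1}(\ln n)^{-1/4}=(\ln n)^{-1/8}\to 0$. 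Yet $|\rho(s_0,n)|\to 1$, so your claimed uniform separation fails; the prefactor is $(1-\rho^2(s_0,n))^{-1/2}\asymp\sqrt{\ln n/\ln\ln n}\to\IF$, and your comparison bound, summed over the roughly $n/2$ cross-block pairs at lag $s_0$, is of order $n\cdot\sqrt{\ln n/\ln\ln n}\cdot n^{-1}(\ln n)^{-1/4}\asymp(\ln n)^{1/4}(\ln\ln n)^{-1/2}\to\IF$. So your proof genuinely fails on arrays satisfying the hypotheses, even though the lemma's conclusion is still true for them.

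The missing ingredient is the version of the comparison inequality: the paper applies Berman's inequality in the Li--Shao (2001) form, where the factor multiplying $\EXP{-\frac{u_n^2(x_i)+u_n^2(x_j)}{2(1+|\rho_{ij}|)}}$ is $|\arcsin(\rho_{ij})|\le\frac{\pi}{2}|\rho_{ij}|$ rather than $|\rho_{ij}|(1-\rho_{ij}^2)^{-1/2}$. No dangerous prefactor then appears, and the cross-block error is bounded by $C\frac{n^2}{r_n}\sum_{i,j=1}^d\sum_{s=l_n}^n|\rho_{ij}(s,n)|\EXP{-\frac{2\ln n-\ln\ln n}{1+|\rho_{ij}(s,n)|}}$, which is exactly the quantity that \eqref{eq2.2} controls. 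With that single substitution your argument goes through; indeed your one-shot comparison against the block-diagonal matrix, with pair count at most $2n\le 2n^2/r_n$, is a slightly more economical accounting than the paper's bound of $(q_n-1)$ times the sum over all pairs at lags exceeding $l_n$.
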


\begin{proof}  Define $N_n=\{1,2,\cdots,n\}$ for any positive integer $n$ and set
\begin{eqnarray*}
N_{{r_n}q_n}=(I_1\cup J_1)\cup(I_2 \cup J_2)\cup\ldots \cup(I_{q_n} \cup J_{q_n}),
\end{eqnarray*}
with $I_s=\{(s-1){r_n}+1, \ldots,s{r_n}-l_n\}$ and $J_s=\{s{r_n}-l_n+1,\ldots,s{r_n}\}$ for $s=1,2, \ldots, q_n$.
Since ${r_n}q_n\leq n<({r_n}+1)q_n<{r_n}q_n+l_n$, we get $|N_n\backslash N_{{r_n}q_n}|<q_n<l_n$, where $\abs{K}$ means the length of the interval  $K\subset  \R$.
Further, define sets $I_{q_n+1}$ and $J_{q_n+1}$ by
\begin{eqnarray*}
I_{q_n+1} &=& \{{r_n}q_n-{r_n}+l_n+1,\ldots,{r_n}q_n-1,{r_n}q_n\},
\\
J_{q_n+1} &=& \{{r_n}q_n+1,\ldots,{r_n}q_n+l_n-1,{r_n}q_n+l_n\}.
\end{eqnarray*}
Clearly,  $|I_{q_n+1}|={r_n}-l_n$, $|J_{q_n+1}|=l_n$ and
 $I_{q_n+1}\subset N_{r_nq_n}$ and $N_n\backslash N_{r_nq_n}\subset J_{q_n+1}$.
Using the fact that
$$l_n=o(r_n), \quad l_n=o(n), \quad \lim_{n\to \IF} n(1-\Phi(u_n(x_i)))= e^{-x_i}$$
we obtain
\begin{eqnarray*}
0
&\leq&
\pk{\bigcap_{s=1}^{q_n}\bigcap_{i=1}^d\{M^{(i)}(I_s)\le u_n(x_i)\}}-\pk{\bigcap_{i=1}^d\{M^{(i)}_n\le u_n(x_i)\}}\nonumber\\
&\leq&
\sum^{q_n+1}_{s=1}\sum_{i=1}^d\pk{M^{(i)}(I_s)\le u_n(x_i)< M^{(i)}(J_s)}\nonumber\\
&\leq&
\sum^{q_n+1}_{s=1}\sum_{i=1}^d\pk{u_n(x_i)< M^{(i)}(J_s)}\nonumber\\
&\le &
(q_n+1)l_n \sum_{i=1}^d (1-\Phi(u_n(x_i))) \\
&\to& 0
\end{eqnarray*}
as $n\to\infty$, where $M^{(i)}(I_s)=\max_{j\in I_s}X_{n,j}^{(i)}$.
 Using Berman's inequality given in Li and Shao (2001) (see also  Piterbarg (1996))
and \eqref{eq2.2}
\begin{eqnarray*}
&&\left|\pk{\bigcap_{s=1}^{q_n}\bigcap_{i=1}^d\{M^{(i)}(I_s)\le u_n(x_i)\}}-
\prod_{s=1}^{q_n}\pk{\bigcap_{i=1}^d\{M^{(i)}(I_s)\le u_n(x_i)\}}\right|\\
&\le& (q_n-1)\frac{1}{2\pi}\sum_{i,j=1}^d\sum_{1\le s<t\le n, t-s>l_n}\Bigl|\arcsin(\rho_{ij}(t-s,n))\Bigr|\EXP{-\frac{u_n^2(x_i)+u_n^2(x_j)}{2(1+|\rho_{ij}(t-s,n)|)}}\\
&\le&C\frac{n^2}{r_n}\sum_{i,j=1}^d\sum_{s=l_n}^n|\rho_{ij}(s,n)|\EXP{-\frac{2\ln n-\ln\ln n}{1+|\rho_{ij}(s,n)|}}\\
&\to& 0\quad\text{as} \quad n\to\IF,
\end{eqnarray*}
where $C$ is some positive constant.
Since further
\begin{eqnarray*}
0
&\leq&
\prod_{s=1}^{q_n}\pk{\bigcap_{i=1}^d\{M^{(i)}(I_s)\le u_n(x_i)\}}
-\prod_{s=1}^{q_n}\pk{\bigcap_{i=1}^d\{M^{(i)}(I_s\cup J_s)\le u_n(x_i)\}}\\
&\leq&
\sum^{q_n}_{s=1}\sum_{i=1}^d\pk{u_n(x_i)< M^{(i)}(J_s)}
\to 0
\end{eqnarray*}
as $n\to \IF$, the lemma follows.  \end{proof}

\begin{remark} \label{remE}  If $\{s_n,n\ge 1\}$ is a sequence of positive integers such that $s_n=o(n)$ and $r_n=o(s_n)$, then clearly
both \eqref{eq2.1} and \eqref{eq2.2} hold with $r_n$ replaced by $s_n$. From the proof above we see that these two conditions
are the only assumptions of Lemma \ref{ale1}. Hence \eqref{eq3} still holds if we substitute $q_n$ by $t_n= [n/s_n]$.
\end{remark}

\begin{lemma}\label{le3.5}
Under the conditions of Theorem \ref{ath1}, for any bounded index set $K \subset \{2,3, \ldots\}$ and  each $c \in \{2, \ldots,d\}$ we have
\begin{equation}\label{aeq7}
\begin{array}{ll}
&\lim_{n\to \IF}\pk{X_{n,1}^{(s)} \le u_n(x_s),1\le s <c,X_{n,k}^{(t)}\le u_n(x_t),1\le t\le d, k\in K | X_{n,1}^{(c)}> u_n(x_c)}\\
=&\mathbb{P}(\frac{A}{2}+\sqrt{{\delta}_{sc}(0)}W_{1,\wzc{c}}^{(s)}\le {\delta}_{sc}(0)+\frac{x_s-x_c}{2}, 1\le s <c, {\delta}_{sc}(0)<\IF,\\
&\quad \frac{A}{2}+\sqrt{\delta_{tc}(k-1)}W_{k,\wzc{c}}^{(t)}\le \delta_{tc}(k-1)+\frac{x_t-x_c}{2},1\le t\le d, \mbox{for all}\ \  k \in K
\ \mbox{such that}\ \ \delta_{tc}(k-1)<\IF).
\end{array}
\end{equation}
Further
\begin{equation}\label{aeq8}
\begin{array}{ll}
&\lim_{n\to \IF}\pk{X_{n,k}^{(t)}\le u_n(x_t),1\le t\le d, k\in K | X_{n,1}^{(1)}> u_n(x_1)}\\
=&\mathbb{P}(\frac{A}{2}+\sqrt{\delta_{t1}(k-1)}W_{k,1}^{(t)}\le \delta_{t1}(k-1)+\frac{x_t-x_1}{2},1\le t\le d, \mbox{for all}\ \  k \in K
\ \  \mbox{such that}\ \ \delta_{t1}(k-1)<\IF),
\end{array}
\end{equation}
and $\{W_{k,i}^{(t)}, 1\le t\le d, \delta_{ti}(k-1)<\IF, k\in \{1\}\cup K\}$
are jointly normal with zero means and
\BQNY
 Cov(W_{k,i}^{(j)}, W_{l,i}^{(t)})&=&\frac{\delta_{ji}(k-1)+\delta_{ti}(l-1)-\delta_{jt}(|k-l|)}{2\sqrt{\delta_{ji}(k-1)\delta_{ti}(l-1)}},
\quad i,j,t =1,\cdots,d, \quad k,l \in \{1\} \cup K.
\EQNY
\end{lemma}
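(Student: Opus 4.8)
The plan is to reduce both \eqref{aeq7} and \eqref{aeq8} to a single computation by conditioning on the exceeding coordinate and tracking the Gaussian residuals. Fix $c\in\{1,\ldots,d\}$; the case $c=1$ yields \eqref{aeq8}, where the range $1\le s<c$ is empty, so it is a special case of the argument for \eqref{aeq7}. First I would introduce the overshoot $A_n:=u_n(x_c)\bigl(X_{n,1}^{(c)}-u_n(x_c)\bigr)$. Since $X_{n,1}^{(c)}$ is standard normal and $u_n(x_c)\to\IF$, a Mills-ratio estimate gives $\pk{A_n>s\mid X_{n,1}^{(c)}>u_n(x_c)}=\frac{1-\Phi(u_n(x_c)+s/u_n(x_c))}{1-\Phi(u_n(x_c))}\to e^{-s}$, so conditionally $A_n\todis A$, a unit exponential.

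Next I would regress every coordinate appearing in the event onto the conditioning variable. For each admissible pair $(t,k)$ with $k\in K$ write
\[ X_{n,k}^{(t)}=\rho_{tc}(k-1,n)\,X_{n,1}^{(c)}+\sqrt{1-\rho_{tc}^2(k-1,n)}\;Y_{n,k}^{(t)}, \]
and similarly $X_{n,1}^{(s)}=\rho_{sc}(0,n)X_{n,1}^{(c)}+\sqrt{1-\rho_{sc}^2(0,n)}\,Y_{n,1}^{(s)}$ for $1\le s<c$. Over the finite index set the family $\{Y_{n,k}^{(t)}\}$ is jointly Gaussian with zero mean and unit variance, and — crucially — it is independent of $X_{n,1}^{(c)}$ (orthogonality is independence in the Gaussian case), hence independent of $A_n$. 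A direct computation gives $\mathrm{Cov}(Y_{n,k}^{(j)},Y_{n,l}^{(t)})=\frac{\rho_{jt}(|k-l|,n)-\rho_{jc}(k-1,n)\rho_{tc}(l-1,n)}{\sqrt{(1-\rho_{jc}^2(k-1,n))(1-\rho_{tc}^2(l-1,n))}}$; substituting $\rho_{ab}(\cdot,n)=1-\delta_{ab}(\cdot)/\ln n+o(1/\ln n)$ from \eqref{eq1.2} and simplifying shows this converges to the stated covariance of $W_{k,c}^{(t)}$. Thus $\{Y_{n,k}^{(t)}\}\todis\{W_{k,c}^{(t)}\}$, jointly with and independently of $A_n\todis A$.

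Rewriting $\{X_{n,k}^{(t)}\le u_n(x_t)\}$ as $\{Y_{n,k}^{(t)}\le\beta_{n,k}^{(t)}\}$ with $\beta_{n,k}^{(t)}=\bigl(u_n(x_t)-\rho_{tc}(k-1,n)(u_n(x_c)+A_n/u_n(x_c))\bigr)/\sqrt{1-\rho_{tc}^2(k-1,n)}$, I would expand using $u_n(x)=x/\sqrt{2\ln n}+b_n$ and $(1-\rho_{tc}(k-1,n))\ln n\to\delta_{tc}(k-1)$. When $\delta_{tc}(k-1)<\IF$ the three contributions tend to $\sqrt{\delta_{tc}(k-1)}$, $\frac{x_t-x_c}{2\sqrt{\delta_{tc}(k-1)}}$ and $-\frac{A}{2\sqrt{\delta_{tc}(k-1)}}$, so in the limit the constraint becomes $\{\frac{A}{2}+\sqrt{\delta_{tc}(k-1)}\,W_{k,c}^{(t)}\le\delta_{tc}(k-1)+\frac{x_t-x_c}{2}\}$ after clearing $\sqrt{\delta_{tc}(k-1)}$; when $\delta_{tc}(k-1)=\IF$ one checks $\beta_{n,k}^{(t)}\sim\sqrt{(1-\rho_{tc}(k-1,n))\ln n}\to\IF$, so that constraint is asymptotically non-binding and drops out, which is exactly why the limiting event ranges only over indices with finite $\delta$. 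Finally I would assemble the pieces: using independence of the $Y$'s from $X_{n,1}^{(c)}$, condition on $X_{n,1}^{(c)}=u_n(x_c)+s/u_n(x_c)$ and write the conditional probability as $\int_0^\infty \pk{\bigcap_{t,k}\{Y_{n,k}^{(t)}\le\beta_{n,k}^{(t)}(s)\}}\,dF_n(s)$, where $F_n\Rightarrow\mathrm{Exp}(1)$; the inner finite-dimensional Gaussian probability converges uniformly on compacts to a bounded continuous function of $s$, so the integral converges to the claimed expression.

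The main obstacle, and the step I would be most careful about, is the rigorous interchange of limits just described: one must dominate the integrand uniformly in $s$ and simultaneously handle the indices with $\delta_{tc}(\cdot)=\IF$, showing that the probability of violating any of these should-hold constraints vanishes so that the limiting event is precisely the intersection over finite-$\delta$ indices. Because $K$ is bounded everything is finite-dimensional, so the portmanteau theorem applies once we know the limit law $(A,\{W_{k,c}^{(t)}\})$ is continuous and assigns zero probability to the relevant boundary sets; this is what lets us pass from convergence in distribution to convergence of the conditional probabilities.
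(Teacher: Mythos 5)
Your proposal is correct and takes essentially the same route as the paper's own proof: the paper likewise conditions on the exceeding coordinate via $X_{n,1}^{(c)}=u_n(x_c)+z/u_n(x_c)$ (its integral representation \eqref{eq3.4}, following Lemma 4.1 of Hsing, H\"usler and Reiss (1996)), standardizes the conditional Gaussian residuals $Z_{n,k,c}^{(i)}$, derives exactly the same covariance limit, drops the infinite-$\delta$ constraints, and integrates against the asymptotically unit-exponential overshoot density in \eqref{add1}. Your regression decomposition with the overshoot $A_n$ is just a repackaging of that conditioning, so the two arguments coincide in substance.
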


\begin{proof}
We follow  the arguments in the proof of Lemma 4.1 in Hsing, H\"usler and Reiss (1996).
First like (4.1) therein we have for each $c \in \{2, \ldots,d\}$,
\begin{eqnarray}\label{eq3.4}
&&\pk{X_{n,1}^{(s)} \le u_n(x_s),1\le s <c,X_{n,k}^{(t)}\le u_n(x_t),1\le t\le d, k\in K | X_{n,1}^{(c)}> u_n(x_c)}\nonumber\\
&\sim&\int_0^{\IF}\pk{X_{n,1}^{(s)} \le u_n(x_s),1\le s <c,X_{n,k}^{(t)}\le u_n(x_t),1\le t\le d, k\in K \big| X_{n,1}^{(c)}= T_n(x_c,z)}\nonumber\\
&&\times\EXP{-z-\frac{z^2}{2u_n^2(x_c)}} dz,
\end{eqnarray}
where $T_n(x_c,z)=u_n(x_c)+z/u_n(x_c)$.
Let $\{Y_{n,k,c}^{(i)},1\le i\le d, k\in \{1\}\cup K\}$ have the same distribution as the conditional
distribution of $\{X_{n,k}^{(i)},1\le i\le d, k\in \{1\}\cup K\}$ given $X_{n,1}^{(c)}=T_n(x_c,z)$.
Then
$$\E{Y_{n,k,c}^{(i)}}=\rho_{ic}(k-1,n)T_n(x_c,z)$$
and
$$Cov(Y_{n,k,c}^{(i)},Y_{n,k,c}^{(j)})=\rho_{ij}(|k-l|,n)-\rho_{ic}(k-1,n)\rho_{jc}(l-1,n)$$
for $i,j\in \{1,\ldots,d\}$ and $k,l\in \{1\}\cup K$. Further define
$$Z_{n,k,c}^{(i)}=\frac{Y_{n,k,c}^{(i)}-\rho_{ic}(k-1,n)T_n(x_c,z)}{\sqrt{1-\rho_{ic}^2(k-1,n)}}, \quad
1\le i\le d, k\in \{1\}\cup K.
$$
Then we have
%
\begin{eqnarray*}
Cov( {Z}_{n,k,c}^{(i)}, {Z}_{n,l,c}^{(j)})
=\frac{\rho_{ij}(|k-l|,n)-\rho_{ic}(k-1,n)\rho_{jc}(l-1,n)}{\sqrt{(1-\rho_{ic}^2(k-1,n))(1-\rho_{jc}^2(l-1,n))}}
\to \frac{\delta_{ic}(k-1)+\delta_{jc}(l-1)-\delta_{ij}(|k-l|)}{2\sqrt{\delta_{ic}(k-1)\delta_{jc}(l-1)}},
\end{eqnarray*}
for $i,j \in \{1,\ldots,d\}$, $k, l \in \{1\} \cup K.$ Thus, using $u_n^2(x)\sim 2 \ln n$ for $x\inr$ we have
\begin{equation}\label{add1}\begin{array}{ll}
&\pk{ Y_{n,1,c}^{(s)}\le u_n(x_s),1\le s<c, Y_{n,k,c}^{(t)}\le u_n(x_t),1\le t\le d, k\in K }\\
=&\mathbb{P}(\frac{1}{2}{\rho}_{sc}(0,n)z+\sqrt{\frac{1+{\rho}_{sc}(0,n)}{2}}\sqrt{\frac{u_n^2(x_c)(1-{\rho}_{sc}(0,n))}{2}}Z_{n,1,c}^{(s)}
\le \frac{1}{2}(u_n(x_s)u_n(x_c)-{\rho}_{sc}(0,n)u_n^2(x_c)),\\
& \quad \frac{1}{2}\rho_{tc}(k-1,n)z+\sqrt{\frac{1+\rho_{tc}(k-1,n)}{2}}\sqrt{\frac{u_n^2(x_c)(1-\rho_{tc}(k-1,n))}{2}}Z_{n,k,c}^{(t)}\\
&\quad \le \frac{1}{2}(u_n(x_t)u_n(x_c)-\rho_{tc}(k-1,n)u_n^2(x_c)),\quad\text{for}\quad 1\le s<c, 1\le t\le d, k \in K )\\
\to& \mathbb{P}(\frac{z}{2}+\sqrt{{\delta}_{sc}(0)}W_{1,c}^{(s)}\le {\delta}_{sc}(0)+\frac{x_s-x_c}{2},1\le s<c, {\delta}_{sc}(0)<\IF,\\
&\quad \frac{z}{2}+\sqrt{\delta_{tc}(k-1)}W_{k,c}^{(t)}\le \delta_{tc}(k-1)+\frac{x_t-x_c}{2},
1\le t\le d,\\
&\quad  \mbox{for all}\ \  k \in K \ \ \mbox{such that}\ \ \delta_{tc}(k-1)<\IF).
\end{array}\end{equation}
Therefore, \eqref{aeq7} follows by (\ref{add1}) and  \eqref{eq3.4}. The proof of \eqref{aeq8} can be established with similar arguments. Hence the claim follows.
\end{proof}

\begin{lemma}\label{le3.6}
Under the conditions of Theorem \ref{ath1}, for  $c\in \njd$ we have
\begin{equation*}
\lim_{m\to \IF}\limsup_{n\to\IF} \pk{\bigcup_{i=1}^d\bigcup_{j=m}^{r_n}\{X_{n,j}^{(i)}>u_n(x_i)\} \Bigl|X_{n,1}^{(c)}>u_n(x_c)}=0.
\end{equation*}
\end{lemma}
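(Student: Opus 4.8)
The plan is to pass to a union bound and reduce the conditional probability to a partial sum of the series already controlled by \eqref{eq2.3}. First I would write, by subadditivity and the definition of conditional probability,
\[
\pk{\bigcup_{i=1}^d\bigcup_{j=m}^{r_n}\{X_{n,j}^{(i)}>u_n(x_i)\}\,\Big|\,X_{n,1}^{(c)}>u_n(x_c)}\le \sum_{i=1}^d\sum_{j=m}^{r_n}\frac{\pk{X_{n,j}^{(i)}>u_n(x_i),\,X_{n,1}^{(c)}>u_n(x_c)}}{\pk{X_{n,1}^{(c)}>u_n(x_c)}},
\]
so everything reduces to estimating the bivariate Gaussian joint tail, whose coordinates are standard normal with correlation $\rho:=\rho_{ic}(j-1,n)$, divided by the marginal tail. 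For the denominator I would use the normalization built into $u_n$, namely $n(1-\Phi(u_n(x_c)))\to e^{-x_c}$, so that $\pk{X_{n,1}^{(c)}>u_n(x_c)}\ge c\,n^{-1}$ for $n$ large.

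For the numerator I would invoke a standard upper bound for the survival function of a bivariate normal with thresholds $u=u_n(x_i)$, $v=u_n(x_c)$, valid whenever $u-\rho v>0$ and $v-\rho u>0$; it follows from bounding the convex exponent below by its supporting hyperplane at the corner $(u,v)$, and reads
\[
\pk{X_{n,j}^{(i)}>u,\,X_{n,1}^{(c)}>v}\le \frac{(1-\rho^2)^{3/2}}{2\pi(u-\rho v)(v-\rho u)}\EXP{-\frac{u^2-2\rho uv+v^2}{2(1-\rho^2)}}.
\]
Inserting the expansion $u_n^2(x)=2\ln n-\ln\ln n-\ln(4\pi)+2x+o(1)$ into the exponent, together with $u-\rho v\sim\sqrt{2\ln n}\,(1-\rho)$, and dividing by the marginal tail, I expect the powers of $n$ and of $\ln n$ to collapse exactly onto the summand of \eqref{eq2.3}: after simplification the ratio is at most a constant $C(\vk{x})$ times $n^{-(1-\rho)/(1+\rho)}(\ln n)^{-\rho/(1+\rho)}/\sqrt{1-\rho^2}$, where one uses $1/\sqrt{1-\rho}\le \sqrt2/\sqrt{1-\rho^2}$ to absorb the prefactor. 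Summing over $i\le d$ and over $s=j-1$ from $m-1$ to $r_n-1$ then produces a partial sum of the double series in \eqref{eq2.3} with its second index frozen at $c$; by \eqref{eq2.3} its $\limsup_{n\to\IF}$ followed by $m\to\IF$ is zero, which gives the claim.

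The main obstacle is uniformity of this estimate as $\rho=\rho_{ic}(j-1,n)\to1$, which occurs precisely when $\delta_{ic}(j-1)<\IF$. In that regime the corner inequality degenerates, and when $x_i<x_c$ with the correlation extremely close to one, one may have $u-\rho v\le 0$, so the displayed bound is unavailable. I would treat such ``bad'' indices separately, using only the trivial estimate (conditional probability $\le 1$). The key observation is that $u-\rho v\le 0$ forces $(1-\rho)\ln n\le (x_c-x_i)/2+o(1)$, i.e. it stays bounded, and for a bounded value of $(1-\rho)\ln n$ the corresponding summand of \eqref{eq2.3} converges to $e^{-\delta/2}/\sqrt{2\delta}$ with $\delta$ bounded, hence is bounded below by a positive constant $c_0$. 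Therefore \eqref{eq2.3} caps the number of bad indices with lag $\ge m$: for any $\varepsilon>0$ there are at most $\varepsilon/c_0$ of them once $m$ and $n$ are large, so their total contribution (each $\le 1$) is $O(\varepsilon)$. Splitting the sum into the good range, where the corner bound yields the $C(\vk{x})$-multiple of the \eqref{eq2.3}-summand, and this sparse bad range then completes the argument via the double limit.
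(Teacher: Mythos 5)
Your proposal reaches the desired estimate by a genuinely different route from the paper, and apart from one boundary issue it is sound. The paper fixes $i$, replaces the conditioning event $\{X_{n,1}^{(c)}>u_n(x_c)\}$ by conditioning on the exact value $X_{n,1}^{(c)}=u_n(x_c)+z/u_n(x_c)$ and integrating over $z$ (as in Lemma \ref{le3.5}), so that each term becomes a univariate Gaussian tail $\pk{Z_{n,j,c}^{(i)}>\theta_{nj}}$ handled by the Mills ratio; the positivity of the thresholds $\theta_{nj}$ --- precisely the issue your good/bad split addresses --- is imported from Hsing, H\"usler and Reiss (1996), whose derivation of their (4.4) shows that \eqref{eq2.3} forces $\max_{m\le j\le r_n}\left((1-\rho_{ic}(j-1,n))\ln n\right)^{-1}\to 0$ in the double limit, so that for large $m,n$ there are no degenerate indices at all. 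You instead stay with unconditional probabilities (union bound over $i$ and $j$, Savage's corner bound for the bivariate joint tail, the marginal tail in the denominator), and you replace the citation by a self-contained counting argument: an index at which $(1-\rho)\ln n$ stays bounded contributes at least a fixed $c_0>0$ to the sum in \eqref{eq2.3}, so \eqref{eq2.3} caps the number of such indices. Both proofs then identify the resulting bound, term by term, with the summand of \eqref{eq2.3}; your computation that this summand tends to $e^{-\delta/2}/\sqrt{2\delta}$ when $(1-\rho)\ln n\to\delta$ is exactly right and is what makes the counting work. Your version is more elementary and self-contained; the paper's is shorter because the citation eliminates the degenerate regime outright.

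The correction needed: draw the good/bad line at the size of $(1-\rho)\ln n$, not at the sign of $u-\rho v$. The corner bound degenerates not only when $u-\rho v\le 0$, but already when $u-\rho v$ is positive yet of smaller order than $(1-\rho)\sqrt{\ln n}$, which happens whenever $(1-\rho)\ln n$ approaches $(x_c-x_i)/2$ from above: there the prefactor $1/\left((u-\rho v)(v-\rho u)\right)$ blows up, and your claim that the ratio is at most $C(\vk{x})$ times the summand of \eqref{eq2.3} fails even though the bound itself is valid. The fix costs nothing: declare $j$ bad whenever $(1-\rho_{ic}(j-1,n))\ln n\le \Lambda'$ for a fixed $\Lambda'$ large compared with $|x_i-x_c|$ (this also subsumes the symmetric corner $v-\rho u\le 0$, which you need when $x_i>x_c$ and treat only implicitly). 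Your counting argument uses nothing about bad indices except boundedness of $(1-\rho)\ln n$, so it applies verbatim to the enlarged bad set; on its complement both $u-\rho v$ and $v-\rho u$ are bounded below by a constant times $(1-\rho)\sqrt{\ln n}$, and the $C(\vk{x})$ claim becomes genuine. (One further caveat, which you share with the paper's own proof: the uniform constants require the correlations to stay bounded away from $-1$; both arguments leave factors of order $e^{C/(1+\rho)}$ uncontrolled in that regime, e.g.\ via Slepian's inequality and $r_n=o(n)$ one can dispose of nonpositive correlations separately.)
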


\begin{proof} It suffices to show that for each fixed $i\in \njd$
\begin{equation*}
\lim_{m\to \IF}\limsup_{n\to\IF} \pk{\bigcup_{j=m}^{r_n}\{X_{n,j}^{(i)}>u_n(x_i)\}\Bigl|X_{n,1}^{(c)}>u_n(x_c)}=0.
\end{equation*}
As in the proof of Lemma \ref{le3.5}, write with $\EE{a_{nj}(z)=\rho_{ic}(j-1,n)(u_n(x_c)+z/u_n(x_c))} $ and $b_{nj}:=\sqrt{1-\rho_{ic}^2(j-1,n)}$
\begin{eqnarray*}
\pk{\bigcup_{j=m}^{r_n}\{X_{n,j}^{(i)}>u_n(x_i)\}|X_{n,1}^{(c)}>u_n({x_c})} &\sim&\int_0^{\IF}\pk{\bigcup_{j=m}^{r_n}\{a_{nj}(z)+Z_{n,j,c}^{(i)}{b_{nj}}>u_n(x_i)\}}
\EXP{-z-\frac{z^2}{2u_n^2(\tE{x_c})}}\, dz.
\end{eqnarray*}
Hence, we only need to show that for each fixed $z_0>0$
\BQNY
&\lim_{m\to \IF}\limsup_{n\to\IF} \int_{0}^{z_0}
\pk{\bigcup_{j=m}^{r_n}\{a_{nj}(z)+Z_{n,j,c}^{(i)}{b_{nj}}>u_n(x_i)\}}\EXP{-z-\frac{z^2}{2u_n^2({x_c})}}\, dz=0,
\EQNY
 which follows if we show
\begin{equation}\label{add2}
\lim_{m\to\IF}\limsup_{n\to\IF}\sup_{0\le z\le z_0}
\sum_{j=m}^{r_n}\pk{a_{nj}(z)+Z_{n,j,c}^{(i)}{b_{nj}}>u_n(x_i)}=0.
\end{equation}
In view of the derivation of (4.4) in Hsing, H\"usler and Reiss (1996), condition \eqref{eq2.3} implies
\begin{eqnarray*}
\lim_{m\to\IF}\limsup_{n\to\IF}  \max_{m\le j\le r_n}((1-\rho_{ic}(j-1,n))\ln n)^{-1}=0.
\end{eqnarray*}
Thus, for large $n$ and $j\in [m,r_n]$ we have
$${\theta_{nj}:=}\frac{u_n(x_i)-u_n(x_c)\rho_{ic}(j-1,n)}{\sqrt{1-\rho_{ic}^2(j-1,n)}}
-\frac{z\rho_{ic}(j-1,n)}{u_n(x_c)\sqrt{1-\rho_{ic}^2(j-1,n)}}>0.$$
By the fact that
$1-\Phi(x)\le x^{-1}\varphi(x)\quad\text{for} \quad x>0,$
we obtain
\begin{eqnarray*}
\pk{Z_{n,j,c}^{(i)}>{\theta_{nj}}}
&\le&
\frac1{\theta_{nj}\sqrt{2\pi}}\EXP{-\frac{1}{2}\theta_{nj}^2}.
\end{eqnarray*}
Next, for some positive constant $C$  depending only on $x_i,x_c$ and $z_0$ we have
\begin{eqnarray*}
\theta_{nj}^2&\le&C+\frac{1-\rho_{ic}(j-1,n)}{1+\rho_{ic}(j-1,n)}b_n^2\\
&\le&C+\frac{1-\rho_{ic}(j-1,n)}{1+\rho_{ic}(j-1,n)}(2\ln n-\ln\ln n),
\end{eqnarray*}
which implies that
\begin{equation}\label{add3}
\pk{Z_{n,j,c}^{(i)}>\theta_{nj}}
\le C^* {b_{nj}^{-1}}n^{-\frac{1-\rho_{ic}(j-1,n)}{1+\rho_{ic}(j-1,n)}}(\ln n)^{-\frac{\rho_{ic}(j-1,n)}{1+\rho_{ic}(j-1,n)}}
\end{equation}
for some $C^*$ depending on $x_i,x_c$ and $z_0$.
Hence  (\ref{add2}) follows from (\ref{add3}), i.e., the lemma holds. 
\end{proof}

\begin{proof}[Proof of Theorem \ref{ath1}] In view of Lemma  \ref{le3.5} 
\begin{equation*}
\lim_{m\to \IF}\lim_{n\to \IF}\pk{\bigcap_{t=1}^{d}\{M_{1,m}^{(t)}\le u_n(x_t)\} \Bigl |X_{n,1}^{(1)}>u_n(x_1)}
=\vartheta_{1}(\tE{\vk{x}})
\end{equation*}
and for $i=2, \cdots, d$
\begin{equation*}
\lim_{m\to \IF}\lim_{n\to \IF}\pk{\bigcap_{s=1}^{i-1}\{M_{m}^{(s)}\le u_n(x_s)\},\bigcap_{t=i}^{d}\{M_{1,m}^{(t)}\le u_n(x_t)\}\Bigl |X_{n,1}^{(i)}>u_n(x_i)}
=\vartheta_{i}(\tE{\vk{x}})\tE{,}
\end{equation*}
with $\vartheta_1(\vk{x})$ and $\vartheta_i(\tE{\vk{x}})$ defined in \eqref{aeq1} and \eqref{aeq6} respectively, 
and by making use of Lemma \ref{le3.6}
\begin{equation*}
\lim_{n\to \IF}\pk{\bigcap_{t=1}^{d}\{M_{1,r_n}^{(t)}\le u_n(x_t)\} \Bigl |X_{n,1}^{(1)}>u_n(x_1)}
=\vartheta_{1}(\tE{\vk{x}})
\end{equation*}
and for $i=2,\cdots,d$
\begin{equation*}
\lim_{n\to \IF}\pk{\bigcap_{s=1}^{i-1}\{M_{r_n}^{(s)}\le u_n(x_s)\},\bigcap_{t=i}^{d}\{M_{1,r_n}^{(t)}\le u_n(x_t)\}\Bigl |X_{n,1}^{(i)}>u_n(x_i)}
=\vartheta_{i}(\tE{\vk{x}}).
\end{equation*}
Hence, by $n(1-\Phi(u_n(x)))\to e^{-x}$ as $n\to \IF$, the theorem follows if further 
\begin{eqnarray*}
&&\pk{\bigcap_{i=1}^{d} \{M_{n}^{(i)}\le u_n(x_i)\}}\\
&&-\exp\left(-n\pk{X_{n,1}^{(1)}>u_n(x_1),\bigcap_{t=1}^d\{M_{1,r_n}^{(t)}\le u_n(x_t)\}}\right.\\
&&\qquad \qquad \qquad\left.-n\sum_{i=2}^d\pk{X_{n,1}^{(i)}>u_n(x_i),\bigcap_{s=1}^{i-1}\{M_{r_n}^{(s)}\le u_n(x_s)\},\bigcap_{t=i}^d\{M_{1,r_n}^{(t)}\le u_n(x_t)\}}\right)\\
&&\to 0\quad\text{as}\quad n\to\IF.
\end{eqnarray*}
Following the arguments  in the proof of Theorem 2.1 in O'Brien (1987),
we first derive  an asymptotic upper bound for $p_{n,d}:=\pk{\bigcap_{i=1}^{d} \{M_{n}^{(i)}\le u_n(x_i)\}}$.
Utilising \eqref{eq3} and Lemma \ref{le1} for all large $n$ we obtain
\begin{eqnarray*}
p_{n,d}&=&\left(\pk{\bigcap_{i=1}^{d} \{M_{r_n}^{(i)}\le u_n(x_i)\}}\right)^{q_n}+o(1)\nonumber\\
&=&\left(1-\pk{\bigcup_{i=1}^{d} \{M_{r_n}^{(i)}>u_n(x_i)\}}\right)^{q_n}+o(1)\nonumber\\
&\le&\left(1-r_n\pk{X_{n,1}^{(1)}>u_n(x_1),\bigcap_{t=1}^d\{M_{1,r_n}^{(t)}\le u_n(x_t)\}}\right.\nonumber\\
&&\left.-\sum_{i=2}^dr_n\pk{X_{n,1}^{(i)}>u_n(x_i),\bigcap_{s=1}^{i-1}\{M_{r_n}^{(s)}\le u_n(x_s)\},\bigcap_{t=i}^d\{M_{1,r_n}^{(t)}\le u_n(x_t)\}}\right)^{q_n}+o(1)\nonumber\\
&\le&\exp\left(-n\pk{X_{n,1}^{(1)}>u_n(x_1),\bigcap_{t=1}^d\{M_{1,r_n}^{(t)}\le u_n(x_t)\}}\right.\nonumber\\
&&\qquad \quad \left.-n\sum_{i=2}^d\pk{X_{n,1}^{(i)}>u_n(x_i),\bigcap_{s=1}^{i-1}\{M_{r_n}^{(s)}\le u_n(x_s)\},\bigcap_{t=i}^d\{M_{1,r_n}^{(t)}\le u_n(x_t)\}}\right)+o(1).
\end{eqnarray*}
The rest of the proof is dedicated to the derivation of an asymptotic lower bound for $p_{n,d}$. 
Choose a sequence of positive integers $\{s_n,n\ge 1 \}$ such that $r_n=o(s_n), \EE{s_n=o(n)}$, and \eqref{eq3} holds with
$r_n$ replaced by $s_n$ and $q_n$ replaced by $t_n=[n/s_n]$. In view of the assumptions  \EE{(see Remark \ref{remE})} this is possible.
Since $r_n=o(s_n)$, we have 
\begin{equation}\label{eq5}
\pk{M_{r_n}^{(i)}>u_n(x_i)}=o\left(\pk{M_{s_n}^{(i)}> u_n(x_i)}\right), \ \ 1\le i \le d.
\end{equation}
We proceed by induction showing that as $n\to \IF$
\BQN\label{eq6}
&&\lefteqn{\pk{\bigcup_{i=1}^{d} \{M_{s_n}^{(i)}> u_n(x_i)\}}}\\
&=&\left(\sum_{k=1}^{s_n-r_n}\pk{X_{n,k}^{(1)}>u_n(x_1),\bigcap_{t=1}^d\{M_{k,s_n}^{(t)}\le u_n(x_t)\}}\right.\nonumber\\
&&+\left.\sum_{i=2}^d\sum_{k=1}^{s_n-r_n}\pk{X_{n,k}^{(i)}>u_n(x_i),\bigcap_{s=1}^{i-1}\{M_{k-1,s_n}^{(s)}\le u_n(x_s)\},\bigcap_{t=i}^d\{M_{k,s_n}^{(t)}\le u_n(x_t)\}}\right)(1+o(1)).\nonumber
\EQN
If  $d=1$, \cw{as in O'Brien (1987)}, we have
\begin{eqnarray*}
\pk{M_{s_n}^{(1)}> u_n(x_1)}
&=&\pk{M_{s_n-r_n}^{(1)}>u_n(x_1),M_{s_n-r_n,s_n}^{(1)}\le u_n(x_1)}+\pk{M_{r_n}^{(1)}> u_n(x_1)}\\
&=&\left(\sum_{k=1}^{s_n-r_n}\pk{X_{n,k}^{(1)}>u_n(x_1), M_{k,s_n}^{(1)}\le u_n(x_1)}\right)(1+o(1)) \quad \text{as}\quad  n\to \IF.
\end{eqnarray*}
For $d=2$, by \eqref{addeq7}, \eqref{aeq2} and stationarity we have
\begin{eqnarray*}
&&\pk{\{M_{s_n}^{(1)}>u_n(x_1)\}\cup \{M_{s_n}^{(2)}>u_n(x_2)\}}\\
&=&\pk{M_{s_n}^{(1)}>u_n(x_1)}+\pk{M_{s_n}^{(2)}>u_n(x_2),M_{s_n}^{(1)}\le u_n(x_1)}\\
&=&\pk{M_{s_n}^{(1)}>u_n(x_1)}+\pk{M_{s_n}^{(2)}>u_n(x_2),M_{s_n}^{(1)}\le u_n(x_1)}\\
&&-\pk{M_{s_n-r_n,s_n}^{(2)}>u_n(x_2),M_{s_n-r_n,s_n}^{(1)}\le u_n(x_1)}+\pk{M_{r_n}^{(2)}>u_n(x_2),M_{r_n}^{(1)}\le u_n(x_1)}\\
&=&\left(\sum_{k=1}^{s_n-r_n}\pk{X_{n,k}^{(1)}>u_n(x_1),M_{k,s_n}^{(1)}\le u_n(x_1)}\right.
+\sum_{k=1}^{s_n}\pk{X_{n,k}^{(2)}>u_n(x_2), M_{k,s_n}^{(2)}\le u_n(x_2),M_{k-1,s_n}^{(1)}\le u_n(x_1)}\\
&&-\sum_{k=1}^{s_n-1}\pk{M_{k,s_n}^{(2)}>u_n(x_2),X_{n,k}^{(1)}>u_n(x_1),M_{k,s_n}^{(1)}\le u_n(x_1)}\\
&&-\sum_{k=s_n-r_n+1}^{s_n}\pk{X_{n,k}^{(2)}>u_n(x_2), M_{k,s_n}^{(2)}\le u_n(x_2),M_{k-1,s_n}^{(1)}\le u_n(x_1)}\\
&&\left.+\sum_{k=s_n-r_n+1}^{s_n-1}\pk{M_{k,s_n}^{(2)}>u_n(x_2),X_{n,k}^{(1)}>u_n(x_1),M_{k,s_n}^{(1)}\le u_n(x_1)}
\right)(1+o(1))\\
&=&\left(\sum_{k=1}^{s_n-r_n}\pk{X_{n,k}^{(1)}>u_n(x_1),M_{k,s_n}^{(1)}\le u_n(x_1),M_{k,s_n}^{(2)}\le u_n(x_2)}\right.\\
&&\left.+\sum_{k=1}^{s_n-r_n}\pk{X_{n,k}^{(2)}>u_n(x_2), M_{k,s_n}^{(2)}\le u_n(x_2),M_{k-1,s_n}^{(1)}\le u_n(x_1)}\right)(1+o(1)),
\end{eqnarray*}
i.e., \eqref{eq6} holds for $d=2$.
Assume next that  \eqref{eq6} holds for $d=m-1>2$.  By \eqref{eq5}
\begin{eqnarray*}
\pk{\bigcup_{i=1}^m \{M_{s_n}^{(i)}>u_n(x_i)\}}&=&\pk{\bigcup_{i=1}^{m-1} \{M_{s_n}^{(i)}> u_n(x_i)\}}+\pk{\bigcap_{i=1}^{m-1} \{M_{s_n}^{(i)}\le u_n(x_i)\}, M_{s_n}^{(m)}>u_n(x_m)}\\
&=&\Biggl( \pk{\bigcup_{i=1}^{m-1} \{M_{s_n}^{(i)}> u_n(x_i)\}} +
\pk{\bigcap_{i=1}^{m-1} \{M_{s_n}^{(i)}\le u_n(x_i)\}, M_{s_n}^{(m)}>u_n(x_m)}\\
&&-\pk{\bigcap_{i=1}^{m-1} \{M_{s_n-r_n,s_n}^{(i)}\le u_n(x_i)\}, M_{s_n-r_n,s_n}^{(m)}>u_n(x_m)}\Biggr)(1+o(1)).
\end{eqnarray*}
Consequently  \eqref{eq2} implies that \eqref{eq6} holds for $d=m$. According to \eqref{eq6}, by stationarity
we have
\begin{eqnarray*}
&&\pk{\bigcup_{i=1}^d \{M_{s_n}^{(i)}>u_n(x_i)\}}\\
&\le&\left(\sum_{k=1}^{s_n-r_n}\pk{X_{n,k}^{(1)}>u_n(x_1),\bigcap_{t=1}^d\{M_{k,r_n+k-1}^{(t)}\le u_n(x_t)\}}\right.\\
&&\left.+\sum_{i=2}^d\sum_{k=1}^{s_n-r_n}\pk{X_{n,k}^{(i)}>u_n(x_i),
\bigcap_{s=1}^{i-1}\{M_{k-1,r_n+k-1}^{(s)}\le u_n(x_s)\},\bigcap_{t=i}^d\{M_{k,r_n+k-1}^{(t)}\le u_n(x_t)\}}\right)(1+o(1))\\
&\le&\left(s_n\pk{X_{n,1}^{(1)}>u_n(x_1),
\bigcap_{t=1}^d\{M_{1,r_n}^{(t)}\le u_n(x_t)\}}\right.\\
&&\left.+\sum_{i=2}^ds_n\pk{X_{n,1}^{(i)}>u_n(x_i),
\bigcap_{s=1}^{i-1}\{M_{r_n}^{(s)}\le u_n(x_s)\},\bigcap_{t=i}^d\{M_{1,r_n}^{(t)}\le u_n(x_t)\}}\right)(1+o(1)).
\end{eqnarray*}
Since by our choice of the sequence $\{s_n,n\ge 1\}$
\begin{equation*}\label{eq4}
p_{n,d}=\left(\pk{\bigcap_{i=1}^{d} \{M_{s_n}^{(i)}\le u_n(x_i)\}}\right)^{t_n}+o(1) \quad\text{as}\quad n\to \IF
\end{equation*}
we have
\begin{eqnarray*}
&&\pk{\bigcap_{i=1}^{d} \{M_{n}^{(i)}\le u_n(x_i)\}}\\
&\ge&\exp\left(-n\pk{X_{n,1}^{(1)}>u_n(x_1),\bigcap_{t=1}^d\{M_{1,r_n}^{(t)}\le u_n(x_t)\}}\right.\\
&&\qquad \qquad\left.-n\sum_{i=2}^d\pk{X_{n,1}^{(i)}>u_n(x_i),\bigcap_{s=1}^{i-1}\{M_{r_n}^{(s)}\le u_n(x_s)\},\bigcap_{t=i}^d\{M_{1,r_n}^{(t)}\le u_n(x_t)\}}\right)+o(1).
\end{eqnarray*}
 Hence the theorem holds. \end{proof}

\noindent\textbf{Acknowledgments.}
Research of Hashorva and  Weng  was   supported  by the Swiss
National Science Foundation grants 200021-134785, 200021-140633/1 and RARE -318984 (an FP7 Marie Curie IRSES Fellowship).

\section*{References}
\vskip .001in
\begin{itemize}
\itemindent -.35in
\itemsep-.05in


\item[]  J.P. French and R.A. Davis (2013). The asymptotic distribution of the maxima of a Gaussian random field on a lattice. {\it Extremes}, {\bf 16}, 1--26.


\item[] S. Engelke, Z. Kabluchko and M. Schlather (2014).
Maxima of independent, non-identically distributed Gaussian vectors. {\it Bernoulli}, in press.


\item[] B.G. Manjunath, M. Frick and R.-D. Reiss (2013).  Some notes on extremal discriminant analysis. {\it J. Multiv. Analysis} {\bf 103}, 107--115.

\item[] M. Frick and R.-D. Reiss (2013). Expansions and penultimate distributions of
maxima of bivariate normal random vectors. {\it Statist. \& Probab. Lett.} {\bf 83}, 2563--2568.


\item[] E. Hashorva (2013).  Minima and maxima of elliptical triangular arrays and spherical processes.
{\it Bernoulli} {\bf 19}, 886--904.

\item[] E. Hashorva, Z. Kabluchko and A. W\"ubker (2012).  Extremes of independent chi-square random vectors. {\it Extremes} {\bf 15}, 35--42.

\item[] E. Hashorva and Z. Weng (2013). Limit laws for extremes of dependent stationary Gaussian arrays. {\it Statist. \& Probab. Lett.}
{\bf 83}, 320--330.

\item[] T. Hsing, J. H\"usler and R.-D. Reiss (1996). The extremes of a triangular array of normal random variables. {\it Ann. Appl. Probab.} {\bf 6}, 671--686.

\item[] J. H\"usler and R.-D. Reiss (1989). Maxima of normal random vectors: between independence and complete dependence. {\it Statist. \& Probab. Lett.} {\bf 7}, 283--286.

\item[] W.V. Li and Q.M. Shao (2001). Gaussian processes: inequalities, small ball probabilities and applications. {\it Stochastic Processes: Theory and Applications} {\bf 19}, 533--597.

\item[] A.J. McNeil, R. Frey and P. Embrechts (2005).  Quantitative Risk Management: Concepts, Techniques, and Tools.
 {\it Princeton University Press}.

\item[] G.L. O'Brien. (1987). Extreme values for stationary and Markov sequences. {\it Ann. Probab.} {\bf 15}, 281--291.

\item[] V.I. Piterbarg (1996). Asymptotic Methods in the Theory of Gaussian Processes and Fields.
{\it American Mathematical Society, Providence, RI}.

\end{itemize}
\end{document}